\numberwithin{equation}{section} 
\DeclareMathOperator{\gal}{Gal}
\DeclareMathOperator{\ord}{ord}
\DeclareMathOperator{\aut}{Aut}
\DeclareMathOperator{\rad}{rad}
\newtheorem{thm}{Theorem}[section]
\newtheorem{lem}[thm]{Lemma}
\newtheorem{cor}[thm]{Corollary}
\newtheorem{defn}[thm]{Definition}
\newtheorem{con}[thm]{Conjecture}
\newtheorem{prop}[thm]{Proposition}
\newtheorem{exa}[thm]{Example}
\newtheorem{rem}[thm]{Remark}
\begin{document}
\title{Perfect powers in elliptic divisibility sequences}
\author{Jonathan Reynolds}
\address{Mathematisch Instituut \\ Universiteit Utrecht \\ Postbus 80.010 \\ 3508 TA Utrecht \\ Nederland}
\email{J.M.Reynolds@uu.nl}
\date{\today}
%\keywords{Isogeny; elliptic curve; pre-image.}
\thanks{The author is supported by a Marie Curie Intra European Fellowship (PIEF-GA-2009-235210)}
\thanks{The author thanks Gunther Cornelissen, Sander Dahmen and Shaun Stevens for helpful comments}
\subjclass[2000]{11G05, 11D41}

\begin{abstract}
It is shown that there are finitely many perfect powers in an elliptic divisibility sequence whose first term is divisible by $2$ or $3$. For Mordell curves the same conclusion is shown to hold if the first term is greater than $1$. Examples of Mordell curves and families of congruent number curves are given with corresponding elliptic divisibility sequences having no perfect power terms. The proofs combine primitive divisor results with modular methods for Diophantine equations.                                        
\end{abstract} 

\maketitle

\section{Introduction}

Using modular techniques inspired by the proof of Fermat's Last Theorem, it was finally shown in \cite{MR2215137} that the only perfect powers in the Fibonnaci sequence are $1$, $8$ and $144$. Fibonnaci is just one example of an infinte sequence $(h_m)$ of integers  
\[
\ldots ,h_{-2}, h_{-1}, h_0, h_1, h_2, \ldots
\]
satisfying $h_m|h_n$ whenever $m|n$ and, up to sign,  
\[
h_{m+n}h_{m-n}=h_{m+1}h_{m-1}h_n^2-h_{n+1}h_{n-1}h_m^2
\]
for all $m,n \in \mathbb{Z}$, where $h_2h_3 \ne 0$. Gezer and Bizim
 \cite{MR2669714} have recently described the squares in some of these sequences but $(h_m)$ was first studied in general by Ward \cite{MR0023275} and is related to a Weierstrass equation 
\begin{equation} \label{we}
 y^2+a_1xy+a_3y=x^3+a_2x^2+a_4x+a_6
\end{equation}
with integer coefficients. See \cite{MR2514094, MR1171452} for background on Weierstrass equations and elliptic curves. The non-singular rational points on (\ref{we}) form a group $E_{ns}(\mathbb{Q})$ and for $P \in E_{ns}(\mathbb{Q})$ different from the identity we can write 
\begin{equation} \label{B_P}
(x(P),y(P))=\left(\frac{A_P}{B_P^2}, \frac{C_P}{B_P^3} \right),
\end{equation}
where $A_P,B_P,C_P \in \mathbb{Z}$ and $\gcd(A_PC_P,B_P)=1$. Let $(h_m)$ be a sequence of integers as above with $h_0=0$ and $h_1=1$. Building on work of Ward, Shipsey \cite{Shi01} has given a formula for a Weierstrass equation (\ref{we}) such that $h_m=\psi_m(0,0)$, where $\psi_m$ is the $m$th division polynomial (see Section \ref{FH}) and $h_m=\pm B_{m(0,0)}$ if $\gcd(a_3,a_4)=1$. For example, up to sign, $(0,0)$ on
\[
y^2+xy+y=x^3-2x^2
\] 
generates the Fibonacci sequence $(B_{m(0,0)})$. In \cite{MR0023275} Ward calls $(h_m)$ an elliptic divisibility sequence; however, as in the Fibonacci case, the Weierstrass equation for $(h_m)$ may have a singular point and so not define an elliptic curve. Via the work of Everest \cite{MR2164113, MR2045409}, Ingram \cite{MR2301226}, Silverman 
\cite{IngrSilv, MR961918} et al, it has now become conventional to use the following

\begin{defn} \label{EDS} Let $E/\mathbb{Q}$ be an elliptic curve and (\ref{we}) a Weierstrass equation for $E$.  Let $P \in E(\mathbb{Q})$ be a non-torsion point. For $m \in \mathbb{N}$ denote $B_{mP}$, as in (\ref{B_P}), by $B_m$. The sequence $(B_m)$ is an 
\emph{elliptic divisibility sequence}.
\end{defn}

In the current paper, we are interested in analogues for elliptic divisibility sequences (in the sense of Definition \ref{EDS}) to the result for Fibonacci numbers. There are certainly perfect powers in some elliptic divisibility sequences. For example,
\[
E: y^2+xy=x^3+x^2-7x+5
\]
with $P=(2,-3)$ gives $B_m=1$ for $m=1,2,3,4,7$ and $B_{12}=2^7$. However, the following theorem shows that one can often prove that there are only finitely many perfect powers in such sequences.

\begin{thm} \label{2or3} Let $(B_m)$ be an elliptic divisibility sequence  
whose first term is divisible by $2$ or $3$. There are finitely many perfect powers in $(B_m)$. Moreover, if $B_m=z^l$ for some integer $z$ and prime $l$ then $l$ can be bounded explicitly in terms of $E$ and $P$.   
\end{thm}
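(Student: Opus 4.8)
The plan is to combine the arithmetic of the valuations $\mathrm{ord}_p(B_m)$ with a Frey curve attached to the Weierstrass relation satisfied by $mP$. Writing $mP=(A_m/B_m^2,\,C_m/B_m^3)$ and substituting into (\ref{we}), after clearing denominators one obtains
\[
C_m^2+a_1A_mC_mB_m+a_3C_mB_m^3=A_m^3+a_2A_m^2B_m^2+a_4A_mB_m^4+a_6B_m^6 .
\]
Under the hypothesis $B_m=z^l$ every power $B_m^k=(z^k)^l$ is a perfect $l$-th power, so the displayed identity becomes an equation amenable to the modular method in the three quantities $A_m$, $C_m$ and $z^l$. The assumption that $B_1$ is divisible by $2$ or $3$ enters through $B_1\mid B_m$: it forces $2\mid z$ or $3\mid z$, so the $2$-adic (resp.\ $3$-adic) valuation of each side is pinned down and the Frey curve built from the equation has conductor supported on the bad primes of $E$ together with $2$ (resp.\ $3$) to a controlled power.

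First I would record the valuation formula for the sequence at the prime $p\in\{2,3\}$ dividing $B_1$. Since $p\mid B_1$ the rank of apparition is $m_p=1$, so $\mathrm{ord}_p(B_m)=\mathrm{ord}_p(B_1)+\mathrm{ord}_p(m)+\delta_p$ with $\delta_p$ a bounded correction; as $B_m=z^l$ forces $l\mid\mathrm{ord}_p(B_m)$, this simultaneously constrains $l$ and fixes the local behaviour of the Frey curve at $p$. Next I would attach the Frey curve to the ternary equation, apply Ribet level lowering to its mod-$l$ representation, and compare the resulting weight-$2$ newform of explicitly bounded level $N=N(E,P)$ with the representation coming from the Frey curve. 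For $l$ exceeding an explicit bound depending only on $E$ and $P$, the congruences between Hecke eigenvalues, controlled by the Hasse bound, cannot be satisfied; this proves the ``moreover'' clause.

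For the finiteness statement I would invoke the effective primitive divisor theorem for elliptic divisibility sequences: there is an explicit $M_0=M_0(E,P)$ such that $B_m$ has a primitive divisor for every $m>M_0$. If such a primitive divisor $p$ satisfies $p\nmid m$ then $\mathrm{ord}_p(B_m)=1$ and $B_m$ is not a perfect power; the only surviving indices are those whose every primitive divisor appears to multiplicity at least $2$, which via the Hasse bound on the rank of apparition forces $m=p$ to be a prime of anomalous type. Combining the now-bounded exponent $l\le L_0(E,P)$ with the fixed-$l$ finiteness furnished by the primitive divisor theorem, and checking the finite range $m\le M_0$ directly, yields finitely many perfect powers.

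The main obstacle will be the construction and analysis of the Frey curve: extracting a genuinely ternary equation of the shape required by the modular method from the Weierstrass identity, and, above all, controlling its conductor at the prime $2$ or $3$ so that level lowering lands on an explicit finite list of newforms. It is precisely the divisibility of $B_1$ by $2$ or $3$ that makes this local analysis tractable; without it the conductor at the small prime is not pinned down. This is why the hypothesis cannot simply be dropped, and why the Mordell case, where the special shape $y^2=x^3+a_6$ supplies its own Frey curve, requires only $B_1>1$.
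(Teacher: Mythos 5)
There is a genuine gap at the heart of your proposal: the Frey curve you want to attach does not exist. The cleared-denominator Weierstrass identity
\[
C_m^2+a_1A_mC_mB_m+a_3C_mB_m^3=A_m^3+a_2A_m^2B_m^2+a_4A_mB_m^4+a_6B_m^6
\]
is not a ternary equation of any signature $(p,q,r)$ to which level-lowering recipes apply when the $a_i$ are arbitrary: the mixed terms $a_1A_mC_mB_m$, $a_2A_m^2B_m^2$, $a_4A_mB_m^4$ prevent it from being written as $AX^p+BY^q=CZ^r$, and neither the literature nor the paper attaches a Frey curve to it. (The paper's own remark in Section \ref{mdc} notes that a direct Frey curve $Y^2=X^3-3A_PX+2C_P$ exists precisely in the Mordell case, where all those cross terms vanish; this is why Theorem \ref{Mor} can take that shortcut, not merely because $y^2=x^3+a_6$ ``supplies its own Frey curve'' with a weaker hypothesis.) The paper's actual route is different: since $l\mid\ord_n(B_m)$ and $\ord_n(B_m)=\ord_n(B_1)+\ord_n(m)+O(1)$ (the valuation formula you correctly record), a large $l$ forces $\ord_n(m)\ge e_0$; setting $Q=n^{e_0-1}m'P$, the multiplication-by-$n$ formula $[n]x=\theta_n/\psi_n^2$ converts the power hypothesis into a binary form equation $K_n^E(A_Q,B_Q^2)=uC^l$ with $u$ an $S$-unit, where $K_2^E$ (homogenized $\psi_2^2$, a cubic) and $K_3^E$ (homogenized $\psi_3$, a quartic satisfying (\ref{transvectant})) are Klein forms, and the modular input is Bennett--Dahmen's Frey--Hellegouarch curve for Klein form equations (Propositions \ref{t} and \ref{newform}). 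This also corrects your reading of the hypothesis: the primes $2$ and $3$ are not there to pin down the conductor of a Frey curve at a small prime, but because $n\mid B_1$ is exactly what forces $n^{e_0}\mid m$ (so the multiplication-by-$n$ descent applies), and because only $\psi_2^2$ and $\psi_3$ are Klein forms -- the construction breaks for $n=5$, as the paper states. Moreover, the bound on $l$ is not a bare Hasse-bound contradiction at the lowered level: it requires a prime $p_0$, produced by Silverman's primitive divisor theorem (Theorem \ref{prim}) applied to $B_{n^{e_0}}$, which divides the Frey conductor (since $p_0\mid C$, because $p_0\nmid B_Q$ by Lemma \ref{edsp}(iv)) but does not divide the level $N_0$; Corollary \ref{lbound} then gives $l<(1+\sqrt{p_0})^{2[K_f:\mathbb{Q}]}$.

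Your finiteness argument also fails. You assert that a primitive divisor $p$ of $B_m$ with $p\nmid m$ satisfies $\ord_p(B_m)=1$, so that $B_m$ cannot be a perfect power. Primitivity says nothing about multiplicity: Lemma \ref{edsp}(ii) controls how $p$-adic valuations grow along multiples of the rank of apparition, not the size of the valuation at first occurrence, which can be arbitrarily large. Consequently the primitive divisor theorem by itself does not yield finiteness for a fixed exponent, and the subsequent reduction to ``anomalous'' indices $m=p$ has no basis. The paper instead quotes Theorem 1.1 of \cite{MR2365225}: for each fixed $l>1$ there are only finitely many points $P$ with $B_P$ an $l$th power. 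Once $l$ is bounded as above, finiteness follows by a finite union over the remaining exponents; this Siegel--Faltings-type input is indispensable and is missing from your sketch.
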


The proof of Theorem \ref{2or3} combines a recent Frey-Hellegouarch construction for Klein forms by Bennett and Dahmen \cite{BenDah} with a primitive divisor result due to Silverman \cite{MR961918}. The method of proof is so flexible that it also allows one in certain concrete cases to completely determine the set of all perfect power terms, as was done for the Fibonacci sequence (see Proposition \ref{D=11} and Example \ref{exa8} below). The condition that only $2$ or $3$ divides the first term is because higher primes, such as $5$, do not give a Klein form as in Defintion \ref{K}.      
 
Let $E/\mathbb{Q}$ be an elliptic curve and (\ref{we}) a Weierstrass equation for $E$. Siegel \cite{Sie01} proved that there are finitely many $P \in E(\mathbb{Q})$ with $B_P=1$. In \cite{MR2365225} it is shown that for fixed $l>1$, there are finitely many $P \in E(\mathbb{Q})$ with $B_P=z^l$ for some $z \in \mathbb{Z}$. Since their denominator is a perfect power, perhaps it is reasonable to give the following

\begin{defn} Call $P \in E(\mathbb{Q})$ \emph{power integral} if $B_P$ (as in (\ref{B_P})) is equal to a  perfect power.
\end{defn}

Note that $1$ is a perfect power and so power integral points can be thought of as a generalization of the integral points. A lot of work has been done to make Siegel's theorem effective \cite{MR0231783,MR0263742, MR735341, MR1328329, MR518817} and there are many techniques which can find all of the integral points for large classes of elliptic curves 
\cite{MR1305199, MR1713117, MR1291875, MR1986812}. For certain curves we are now able to find all of the power integral points.   
 
\subsection{Mordell curves}  Theorem \ref{2or3} can be strengthened considerably for Mordell curves.

\begin{thm} \label{Mor}
Let $D \in \mathbb{Z}$ be non-zero and $E: y^2=x^3+D$. There are finitely many perfect powers in an elliptic divisibility sequence $(B_m)$ whose first term is greater than $1$. As in Theorem \ref{2or3}, there is a bound depending on $D$ and $P$ for the possible prime exponents.
\end{thm}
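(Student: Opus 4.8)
The plan is to exploit the especially simple shape of a Mordell equation to attach a Frey--Hellegouarch curve directly to the relation $B_m=z^l$, bypassing the Klein forms of \cite{BenDah} used in Theorem \ref{2or3}; this is what allows the weaker hypothesis $B_1>1$ --- any prime, not only $2$ or $3$ --- to suffice. We may take $l$ prime. Writing $B_1=B_P$, the divisibility property gives $B_1 \mid B_m$, and since $P$ is non-torsion $B_m \neq 0$, so $z^l=B_m \geq B_1 > 1$ and hence $z \geq 2$. The hypothesis is used only here, but it is essential: it forces the $l$-th power occurring below to be nontrivial, which is exactly what the modular method needs in order to bound $l$.

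First I would record the equation satisfied by $mP$. Writing $mP=(A_{mP}/B_m^2,\,C_{mP}/B_m^3)$ as in (\ref{B_P}) and substituting into $y^2=x^3+D$, clearing denominators gives
\begin{equation*}
C_{mP}^2 = A_{mP}^3 + D\, z^{6l},
\end{equation*}
with $\gcd(A_{mP}C_{mP},z)=1$; any prime dividing both $A_{mP}$ and $C_{mP}$ must divide $D$, so after factoring out a bounded common part one is left with finitely many essentially primitive subcases. Reading the result as a ternary equation of signature $(2,3,l)$, namely $C_{mP}^2+(-A_{mP})^3=D\,(z^6)^l$ with $z \geq 2$, I would attach the standard Frey--Hellegouarch curve $E'/\mathbb{Q}$ to this solution; crucially the Mordell equation supplies the $(2,3,l)$ structure directly, with no Klein form, which is precisely why any prime dividing $B_1$ can be accommodated. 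Away from the primes dividing the solution $(A_{mP},C_{mP},z)$, the bad reduction of $E'$ is supported on $2$, $3$ and the primes dividing $D$.

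The modular step is the heart of the matter. For $l$ large one first checks that the mod-$l$ representation $\bar\rho_{E',l}$ is irreducible; modularity over $\mathbb{Q}$ together with Ribet level-lowering then shows that $\bar\rho_{E',l}$ arises from a newform of level $N$ dividing a fixed integer built from $2$, $3$ and $\rad(D)$. The primes $p \mid z$ drop out of the level because $l \mid v_p(\Delta_{E'})$ at each such $p$, which is non-vacuous exactly because $z \geq 2$. Comparing the traces of Frobenius of $E'$ with those of the finitely many resulting newforms at a suitable auxiliary prime yields a congruence that cannot persist once $l$ exceeds an explicit bound $l_0=l_0(D,P)$, and this bounds the exponent. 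I expect this to be the main obstacle: establishing irreducibility of $\bar\rho_{E',l}$ uniformly in $m$, and performing the $2$- and $3$-adic analysis needed to compute the lowered level where $E'$ acquires additive reduction.

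Finally I would pass from the bound $l \leq l_0$ to finiteness of the perfect power terms. Fix a prime $l \leq l_0$. By \cite{MR2365225} only finitely many $Q \in E(\mathbb{Q})$ have $B_Q$ equal to an $l$-th power, and since the multiples $mP$ are distinct non-torsion points, only finitely many $m$ can satisfy $B_{mP}=z^l$; as in the proof of Theorem \ref{2or3}, the primitive divisor theorem \cite{MR961918} furnishes an alternative route to this finiteness. The degenerate value $z=1$ never arises here, since $B_1>1$ forces $B_m>1$. Ranging over the finitely many primes $l \leq l_0$ then leaves only finitely many perfect powers in $(B_m)$, completing the proof.
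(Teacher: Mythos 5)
There is a genuine gap, and it sits exactly at the step you flagged least: the claim that comparing traces of Frobenius ``at a suitable auxiliary prime yields a congruence that cannot persist once $l$ exceeds an explicit bound $l_0(D,P)$.'' This is false as stated. After level-lowering, the only primes removed from the level are those dividing $z$, and these vary with $m$; the congruence $c_p \equiv \pm(p+1) \pmod{\mathcal{L}}$ at such a prime bounds $l$ only in terms of $p$, hence in terms of $m$, which is useless for the theorem. At a \emph{fixed} auxiliary prime $q$ one only gets $c_q \equiv a_q(E') \pmod{\mathcal{L}}$, and this cannot be contradicted for large $l$: the lowered level (supported on $2$, $3$ and $\rad(D)$) generically carries \emph{rational} newforms, for instance those attached via modularity to the Frey curves of integral points on $y^2=x^3+D$ (solutions of $C^2=A^3+Dz^{6l}$ with $z=1$, valid for every $l$), and for such a form the congruence can be an identity of elliptic curves' traces, persisting for all $l$. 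Your only use of the hypothesis $B_1>1$ is to conclude $z\ge 2$, i.e.\ that \emph{some} prime divides $z$; but since that prime is unbounded as $m$ grows, no bound $l_0(D,P)$ follows. (Incidentally, the irreducibility you single out as the main obstacle is not one: by Mazur's isogeny theorem it holds automatically for $l>163$.)

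What the paper does differently is precisely designed to repair this: it manufactures a prime $p_0$ dividing $z$ that is \emph{independent of $m$}. Take any prime $n \mid B_1$ (this is where $B_1>1$ is really used). By Lemma \ref{edsp}, $\ord_n(B_m)=\ord_n(B_1)+\ord_n(m)+O(1)$, and since $l \mid \ord_n(B_m)$, taking $l$ large forces $\ord_n(m)\ge e_0$, where $e_0$ is fixed in advance (via the Siegel--Mahler theorem) so that $B_{n^{e_0}}$ has a prime factor $p_0$ outside the fixed bad set $S$. Then $B_{n^{e_0}} \mid B_m$, so $p_0 \mid B_m$ for \emph{every} large $l$: the Frey curve has multiplicative reduction at the fixed prime $p_0$, $p_0$ is removed from the level, and Proposition \ref{levellow} together with Corollary \ref{lbound} applied at $p=p_0$ bounds $l$ in terms of $D$ and $P$ alone; finiteness then follows from \cite{MR2365225} as in your last paragraph. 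Your choice of Frey curve is not the problem --- the paper factors over $\mathbb{Q}(\sqrt{D'})$ to reach cubic binary forms and uses Theorem \ref{Bill}, but it remarks explicitly that the direct curve $Y^2=X^3-3A_PX+2C_P$ for the $(2,3,l)$ equation would also serve. The missing idea is the fixed primitive prime $p_0$ extracted from the elliptic divisibility sequence structure; without it the modular machinery cannot deliver a bound uniform in $m$.
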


By utilizing the proofs of the theorems above in a specific case we are able to find the Mordell curve of smallest conductor with non-zero rank and no power integral points.  

\begin{prop} \label{D=11}
The elliptic curve $E: y^2=x^3+11$ has no power integral points. 
\end{prop}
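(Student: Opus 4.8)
The plan is to reduce the statement to a question about a single elliptic divisibility sequence and then to run the machinery behind Theorems \ref{2or3} and \ref{Mor} explicitly for $D=11$. First I would compute the Mordell--Weil group: a descent shows that $E(\mathbb{Q})$ has rank $1$ with trivial torsion (the latter confirmed by reduction at a couple of primes of good reduction), and that it is generated by $P=(-7/4,19/8)$, which indeed satisfies $(19/8)^2=(-7/4)^3+11$. Since the torsion is trivial, every non-identity rational point equals $\pm mP$ for a unique $m\ge 1$, and because $B_{-mP}=B_{mP}$, the existence of a power integral point is equivalent to some term $B_m:=B_{mP}$ of the associated elliptic divisibility sequence being a perfect power. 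Thus it suffices to show that no $B_m$ is a perfect power.

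Next I would record the key normalization $B_1=B_P=2$. On the one hand $2\mid B_1$, so the Frey--Hellegouarch/Klein form construction underlying Theorem \ref{2or3} is available and $B_1>1$, so Theorem \ref{Mor} applies as well; on the other hand the divisibility $B_1\mid B_m$ forces every $B_m$ to be even, hence strictly greater than $1$, which already rules out integral points (the case $B_m=1$). It remains to exclude $B_m=z^l$ with $l$ prime and $z\ge 2$. By Theorems \ref{2or3} and \ref{Mor} the exponent $l$ is bounded explicitly in terms of $D=11$ and $P$, so only finitely many $l$ survive.

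For each surviving prime $l$ I would follow the proof of Theorem \ref{2or3}: writing $(x(mP),y(mP))=(A_m/B_m^2,C_m/B_m^3)$, the Weierstrass equation gives $C_m^2=A_m^3+11B_m^6$, and substituting $B_m=z^l$ produces a ternary equation to which the Bennett--Dahmen Klein form attaches a Frey curve. Silverman's primitive divisor theorem supplies, for every $m$ beyond an explicit bound, a primitive prime dividing $B_m$; together with $B_m=z^l$ this controls the conductor of the Frey curve, and level-lowering followed by elimination of the resulting newforms yields a contradiction, leaving only finitely many pairs $(m,l)$. These I would dispose of by computing the low-index terms $B_m$ directly and checking that none is a perfect $l$-th power.

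The main obstacle I expect is the small-exponent end of the argument rather than the large-exponent modular step. For $l=2$ and $l=3$ the modular method is weakest---Ribet-style level lowering either does not apply or fails to eliminate every newform---so these cases will likely require supplementary input: local (congruence) obstructions modulo small primes, an explicit case analysis of the finitely many shapes of the ternary equation, or a direct verification that the relevant small terms $B_1,B_2,\dots$ are neither squares nor cubes. Confirming $B_1=2$ is not a perfect power is trivial, but handling $l=2,3$ \emph{uniformly in $m$}, and making the descent rigorous enough to guarantee that $P$ really generates $E(\mathbb{Q})$ (so that no rational point is overlooked), are the points that will need the most care.
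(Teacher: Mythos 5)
Your overall architecture (reduce to the elliptic divisibility sequence of the generator $(-7/4,19/8)$, observe $B_1=2$ so every $B_m$ is even and integral points are excluded, bound the exponent $l$ by the modular machinery, then treat the surviving primes $l$ one at a time) matches the paper's opening moves, but there is a genuine gap in how you handle each \emph{fixed} $l$ below the bound. Your elimination step hinges on ``a primitive prime dividing $B_m$'', which varies with $m$ and is a priori unbounded: you cannot compute newform coefficients $c_p$ at an unknown prime, and the congruence $c_p\equiv\pm(1+p) \bmod \mathcal{L}$ at such a prime only yields $l<(1+\sqrt{p})^{2[K_f:\mathbb{Q}]}$ (Corollary \ref{lbound}), which is vacuous for fixed small $l$ once $p$ is large. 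Your fallback --- ``computing the low-index terms $B_m$ directly'' --- cannot close the argument either, because the finiteness of $l$-th powers for fixed $l$ comes from Theorem 1.1 of \cite{MR2365225}, which is ineffective; there is no bound on $m$ up to which to check. Indeed, Lemma \ref{edsp} shows that $B_m=z^l$ only forces $2^{l-1}\mid m$, leaving infinitely many candidate $m$ for every fixed $l$.

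What the paper does, and what is missing from your outline, is to use the EDS structure (Lemma \ref{edsp}) to manufacture \emph{fixed} auxiliary primes valid for every candidate $m$: from $\ord_2(B_m)=1+\ord_2(m)\ge l$ one gets $P=2Q$ with $C_Q$ and $2B_Q$ both $l$-th powers, and then for $l>2$ the divisibility and valuation formulas force $19$, $13$ and $619$ to divide $B_Q$ and give $7\mid A_Q$, $7\nmid B_QC_Q$ (Lemma \ref{tec}), uniformly in $m$. These fixed primes allow newform elimination at the fixed levels $2\cdot 3^{\alpha}\cdot 11$ arising from the $(l,l,3)$ Frey curve of Theorem \ref{reci}, disposing of all $l\ge 5$ at once (one stray rational form $f_2$ at $l=5$ needs a separate result), while the prime $7$ feeds the congruence argument at $l=3$. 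For $l=2,3$ the paper does not rely on congruences or finite checks alone: it runs the explicit descent (\ref{1}) over $\mathbb{Q}(\sqrt{11})$, reducing to finitely many parametrizations $(u,v)$, and kills these via known results on the Mordell curves $Y^2=X^3-2376$ and $Y^2=X^3+144$ together with a Chabauty computation on a genus-$2$ curve. Without some mechanism of this kind --- uniform in $m$ for each fixed $l$ --- your proposal cannot be completed as stated.
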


In the general case, allowing for integral points, we expect the following to hold.

\begin{con} \label{con1}
Let $D \in \mathbb{Z}$ be non-zero and $E: y^2=x^3+D$. For $l$ a sufficiently large prime, if $B_P$ (as in (\ref{B_P})) is an $l$th power then $B_P=1$.  
\end{con}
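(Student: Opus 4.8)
\emph{Reformulation.} Writing $P=(A/B^2,C/B^3)$ as in (\ref{B_P}) and clearing denominators in $y^2=x^3+D$ gives
\[
C^2 = A^3 + D\,B^6,\qquad \gcd(AC,B)=1 .
\]
Suppose $B=B_P=b^{\,l}$ is an $l$th power; the claim is that for $l$ larger than a bound depending only on $D$ one must have $b=1$, i.e.\ $B=1$. So I would study the Diophantine equation
\[
C^2 = A^3 + D\,(b^{\,l})^{6},\qquad b>1,
\]
and aim to show it has no solutions once $l$ is large. Each such $P$ generates the elliptic divisibility sequence $(B_{mP})$ with first term $B_1=b^{\,l}$; when $b>1$, Theorem \ref{Mor} already gives finiteness of the perfect powers in $(B_{mP})$ together with a bound on $l$. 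Crucially, however, that bound depends on $P$ — it comes from the index at which Silverman's primitive divisors appear — whereas the conjecture demands a bound depending on $D$ alone. Combined with the finiteness for each fixed $l$ of \cite{MR2365225}, proving it would show that every power-integral point of large exponent is in fact an integral point; the real content is therefore \emph{uniformity in }$P$.

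\emph{Strategy.} The plan is to attach a Frey--Hellegouarch curve directly to the displayed equation, following the Klein-form construction of Bennett and Dahmen \cite{BenDah} used for Theorems \ref{2or3} and \ref{Mor}, but \emph{without} passing through the sequence $(B_{mP})$, so that the conductor of the Frey curve away from $l$ is controlled purely in terms of the primes dividing $6D$. Invoking modularity and Ribet's level-lowering, the mod-$l$ representation on the $l$-torsion of this Frey curve then arises from a weight-$2$ newform of some level $N=N(D)$ that is independent of $l$ and of $P$. For $l$ exceeding an explicit constant there are two cases: either the representation is reducible, or it is isomorphic to the reduction of one of the finitely many newforms at level $N$. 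In the reducible case one expects an elementary descent, combined with Silverman's primitive divisor result, to force $b=1$; this is the mechanism by which the genuine solutions $B=1$ (the integral points) must survive, so the method cannot yield an outright contradiction but must be sharpened into a classification.

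\emph{Main obstacle.} The difficulty — and, I expect, the reason the statement is only conjectured — is the irreducible case: one must show that \emph{every} newform at level $N(D)$ that can occur is incompatible with a nontrivial solution $b>1$. For a single $D$ this is a finite computation (as carried out for $D=11$ in Proposition \ref{D=11}), but there is no known structural input that eliminates these forms \emph{uniformly in} $D$. Two features make this especially hard here: the Mordell curve has $j=0$ and complex multiplication by $\mathbb{Z}[\zeta_3]$, so the descent underlying the equation is governed by the arithmetic of $\mathbb{Z}[\zeta_3]$ and the newforms at level $N(D)$ typically include CM forms, which are the hardest to eliminate and can remain compatible with the Frey representation for an infinite set of $l$; and the bad primes dividing $D$ — in particular primes $>3$, which by the remark after Theorem \ref{2or3} lie outside the Klein-form framework — contribute additive-reduction terms to the conductor that current multi-Frey and Chabauty refinements treat only case by case. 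Overcoming the newform-elimination step in a way that is uniform in $D$, while correctly retaining the $B_P=1$ solutions, is precisely the crux.
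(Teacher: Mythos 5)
The statement you were asked about is a conjecture, and the paper offers no proof of it either --- so the right standard here is whether your reduction and your diagnosis of the obstruction match the paper's, and they largely do, via a different Frey curve. The paper's route (set out in Section \ref{mdc}) is: factor $A_P^3=C_P^2-Dz^{6l}$ over $K=\mathbb{Q}(\sqrt{D'})$, obtaining the parametrizations (\ref{D'=1}) and (\ref{1}), which convert $B_P=z^l$ into finitely many cubic-form equations $F(a,b)=dc^l$ with $d$ and $F$ depending only on $D$; Conjecture \ref{2.1} (whose constant $C_{d,F}$ depends only on $d$ and $F$, hence only on $D$) then forces $c=\pm 1$, i.e.\ $B_P=1$, and Conjecture \ref{2.1} is in turn known to follow from Frey--Mazur. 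You instead attach a Frey curve directly to $C^2=A^3+DB^6$ --- precisely the curve $E_P\colon Y^2=X^3-3A_PX+2C_P$ that the paper itself mentions in the closing remark of Section \ref{mdc} as "a more direct Frey curve" --- whose discriminant is $-2^63^3DB_P^6$, so after level-lowering the level $N(D)$ depends only on the primes dividing $6D$, independent of $P$ and $l$. Your identification of the crux --- eliminating, uniformly in $D$, the finitely many newforms at level $N(D)$ compatible with a solution $b>1$, while the integral-point solutions $B_P=1$ must survive --- is exactly the uniformity gap that the paper plugs conditionally with Frey--Mazur; in that sense your "main obstacle" paragraph and the paper's appeal to Conjecture \ref{2.1} are two descriptions of the same missing input. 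What the paper's cubic-form route buys is that the reduction is already done explicitly (it reuses the parametrizations from the proof of Theorem \ref{Mor}) and plugs into an existing conjecture in the literature; what your direct route buys is avoiding the class-group and unit bookkeeping over $\mathbb{Q}(\sqrt{D'})$.

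Two small inaccuracies worth flagging. First, you locate the survival of the trivial solutions $B_P=1$ in the reducible case, but these solutions give honest elliptic curves of conductor supported on $6D$ whose mod-$l$ representations are typically irreducible and genuinely match newforms at level $N(D)$; the classification problem therefore lives squarely in the irreducible case (as your later paragraph in effect concedes). Second, invoking Silverman's primitive divisor theorem in the reducible case would reintroduce dependence on the point $P$ --- primitive divisors are a property of the sequence $(B_{mP})$ --- which is precisely what the conjecture's uniformity in $P$ forbids; the paper's conditional argument avoids the sequence entirely for this reason, working only with the single equation attached to $P$.
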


At the end of Section \ref{mdc} it is explained that Conjecture \ref{con1} would follow from the Frey-Mazur conjecture \cite{MR1357210}. 

\subsection{Congruent number curves}

A much studied class of elliptic curves is
the congruent number curves $E_N : y^2 = x^3-N^2x$, where $N \ge 1$ is an integer. Let $p$ be an odd prime and $a$, $b$ non-negative integers. For 
$N = 2^ap^b$, a simple algorithm for the determination of the integral points in $E_N(\mathbb{Q})$ has been given in  \cite{MR2219047} and~\cite{MR2219040}. In this case we are able to find all power integral points in $2E_N(\mathbb{Q})$; in fact they are all integral.

\begin{thm} \label{first}
Let $N = 2^ap^b$. If $P \in 2E_N(\mathbb{Q})$  is power integral then 
$N=2^a3^b$ and $P=(c^225,c^335)$, where $a,b$ are odd, $a \ge 3$ and $c= \pm 2^{(a-3)/2}3^{(b-1)/2}$.
\end{thm}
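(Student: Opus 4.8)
The plan is to exploit the fact that $E_N$ has full rational $2$-torsion, so that membership in $2E_N(\mathbb{Q})$ is detected by a complete $2$-descent. First I would record the classical criterion: a point $P=(x,y)\in E_N(\mathbb{Q})$ with $y\ne 0$ lies in $2E_N(\mathbb{Q})$ if and only if each of $x$, $x-N$ and $x+N$ is the square of a rational number. Writing $x=A_P/B_P^2$ in lowest terms and clearing denominators, the coprimality $\gcd(A_P,B_P)=1$ turns this into the statement that the three integers $A_P-NB_P^2$, $A_P$ and $A_P+NB_P^2$ are all perfect squares; that is, they form a three-term arithmetic progression of squares with common difference $NB_P^2$.

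Next I would parametrise this progression. Write $A_P-NB_P^2=t^2$, $A_P=s^2$, $A_P+NB_P^2=r^2$. The outer relation $r^2+t^2=2s^2$ produces, after passing to $(r\pm t)/2$, a primitive Pythagorean triple, and hence coprime integers $m,n$ of opposite parity together with a scaling factor $d$ such that $NB_P^2=4d^2\,mn(m-n)(m+n)$. The four integers $m$, $n$, $m-n$, $m+n$ are pairwise coprime, and all factors of $2$ sit in the even one of $m,n$. Since $N=2^ap^b$ has prime support $\{2,p\}$ and the power-integrality hypothesis makes $B_P$ a perfect $l$th power, unique factorisation forces each of $m,n,m\pm n$ to be a perfect $l$th power up to a bounded contribution from the primes $2$ and $p$. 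This converts the problem into a superelliptic equation $F(m,n)=c\,w^{l}$ associated with the binary quartic $F(m,n)=mn(m-n)(m+n)$, where $c$ is supported on $\{2,p\}$ and $w$ collects the prime-to-$2p$ part.

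At this point I would invoke the Frey--Hellegouarch construction for Klein forms of Bennett and Dahmen, exactly as in the proof of Theorem \ref{2or3}. Their results attach a Frey curve to $F(m,n)=c\,w^{l}$ and, via modularity and level-lowering, bound the prime $l$ and constrain the solutions; crucially the construction requires the twisting prime to admit a Klein form, which among odd primes singles out $p=3$. Solutions of large index are then removed by Silverman's primitive divisor theorem, which guarantees a primitive prime divisor of $B_{mP}$ once $m$ is large and thereby prevents $B_P$ from being a nontrivial perfect power; the finitely many remaining cases are checked directly. The surviving solution is the fundamental triangle $(m,n)=(2,1)$ with $w=1$, forcing $B_P=1$, $N=2^a3^b$, and $x(P)=25c^2$, whence $P=(25c^2,35c^3)$ with the stated constraints on $a,b,c$.

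The main obstacle I anticipate is twofold. The delicate part of the descent is the $2$-adic and $p$-adic bookkeeping needed to distribute the exponents among the four coprime factors and to land on a single equation $F(m,n)=c\,w^l$ whose shape matches the hypotheses of the Klein form theorem; getting the twisting constant $c$ and its support correct is what ultimately forces $p=3$. The genuinely hard input, however, is the resolution of that generalised Fermat equation itself: bounding $l$ and eliminating all solutions except $(m,n)=(2,1)$, $w=1$ relies on the full strength of the modular machinery, and simultaneously showing that the denominator $B_P$ cannot be a nontrivial $l$th power (so that every power integral point is in fact integral) is where Silverman's primitive divisor result must be combined carefully with the modular bounds.
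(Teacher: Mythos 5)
Your opening moves do match the paper: the $2$-descent criterion (that $x(P)$, $x(P)-N$, $x(P)+N$ are rational squares, hence $A_P-NB_P^2$, $A_P$, $A_P+NB_P^2$ are integer squares), and your Pythagorean parametrization $NB_P^2=4d^2mn(m-n)(m+n)$ is a correct way to package the resulting arithmetic progression of squares. The proof breaks at the modular step, in several ways. First, $F(m,n)=mn(m-n)(m+n)=m^3n-mn^3$ is \emph{not} a Klein form: its coefficients are $(\alpha_0,\alpha_1,\alpha_2,\alpha_3,\alpha_4)=(0,1,0,-1,0)$, so $12\alpha_0\alpha_4-3\alpha_1\alpha_3+\alpha_2^2=3\neq 0$ and condition (\ref{transvectant}) fails; Propositions \ref{t} and \ref{newform} therefore do not apply to $F(m,n)=cw^l$. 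Second, even where such Frey-curve machinery applies, modularity and level-lowering only constrain \emph{sufficiently large} prime exponents $l$, whereas Theorem \ref{first} must account for every perfect power $B_P$ -- in particular squares and cubes -- and for a fixed small exponent the set of candidate pairs $(m,n)$ is infinite (what is finite is the set of newforms, not the set of solutions), so ``checking the finitely many remaining cases directly'' is not an available move. Third, Silverman's primitive divisor theorem cannot close this gap: it produces primitive divisors of $B_{mP}$ for large index $m$ in a \emph{fixed} elliptic divisibility sequence, while here $P$ ranges over all power integral points of $2E_N(\mathbb{Q})$, for all $N=2^ap^b$ at once; there is no index tending to infinity, which is why the paper uses primitive divisors only in Theorems \ref{2or3} and \ref{Mor}. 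Finally, $p=3$ is not singled out by any Klein-form property of a ``twisting prime''; that consideration belongs to Theorem \ref{2or3}, where the forms in question are $\psi_2^2$ and $\psi_3$, and has no counterpart in this theorem.

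The paper's actual route, after the same descent, is elementary plus complete Fermat-type results. It factors $\pm 2^ap^bB_P^2=(z_j-z_i)(z_j+z_i)$, observes that the gcd of the two factors is a power of $2$, chooses signs $s_1,s_2,s_3$ so that $p$ divides $z_2+(-1)^{s_1}z_1$, $z_3+(-1)^{s_2}z_1$ and $z_3+(-1)^{s_3}z_2$, and applies Siegel's identity to these three quantities. Because $B_P$ is a perfect power, the quantities are coprime away from $\{2,p\}$ and all carry the same power $p^b$, so dividing through cancels the $p$-part entirely and leaves a ternary equation whose coefficients are pure powers of $2$: equation (\ref{lem}) when the exponent has an odd prime factor, or the quartic equations (\ref{lem2}), (\ref{lem3}) when $B_P$ is a square (so that $B_P^2$ is a fourth power). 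These are solved \emph{completely}, with no largeness assumption on the exponent, by Wiles, Ribet, Darmon--Merel and classical descent; the unique nontrivial solution then forces $(u,v)\in\{(1,-2),(-2,1),(-\tfrac{1}{2},-\tfrac{1}{2})\}$, and the requirement that $-uv/(2^ap^b(u-v))$ be a rational square is what forces $(u,v)=(1,-2)$, $p=3$, $a,b$ odd, $B_P=1$ and $A_P=2^{a-3}3^{b-1}\cdot 25$. If you want to keep your parametrization, the repair is analogous: $p$ divides exactly one of the pairwise coprime factors $m$, $n$, $m-n$, $m+n$, so a linear relation among the other three (for instance $(m+n)-(m-n)=2n$ when $p\mid m$) again yields an equation of the shape (\ref{lem}), (\ref{lem2}) or (\ref{lem3}) with coefficients powers of $2$ only, and the same complete results finish the argument -- no Klein forms and no primitive divisors are needed.
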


In Section \ref{red} Theorem \ref{first} is proven using Fermat's Last Theorem, due to Wiles \cite{MR1333035}, along with the first variants by  
Ribet \cite{MR1438112}, Darmon and Merel~\cite{MR1468926}. 

\begin{thm} \label{second} Let $N=2^ap$, where $a=0$ or $1$. Suppose that 
$P \in E_N(\mathbb{Q})$ has 
\[
x(P) \in   -{\mathbb{Q}^*}^2
\]
and
\[   
x(P)+N \in  p{\mathbb{Q}^*}^2. 
\]
Then there are no perfect powers in the elliptic divisibility sequence generated by $P$.
\end{thm}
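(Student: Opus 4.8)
The plan is to exploit the classical $2$-descent on $E_N$ together with the hypothesis that $P$ lies in a prescribed coset of $2E_N(\mathbb{Q})$, and then to turn the perfect power condition into a ternary equation of Fermat type. First I would record that the descent map $\delta\colon E_N(\mathbb{Q})/2E_N(\mathbb{Q})\to (\mathbb{Q}^*/{\mathbb{Q}^*}^2)^3$ sending $Q\mapsto (x(Q),\,x(Q)-N,\,x(Q)+N)$ is a homomorphism whose three components multiply to a square. Hence the square classes of $x(mP)$ and of $x(mP)\pm N$ depend only on the parity of $m$. For odd $m$ they coincide with those of $P$, and the hypotheses together with the product relation force $x(mP)\in -{\mathbb{Q}^*}^2$, $\ x(mP)+N\in p{\mathbb{Q}^*}^2$ and $x(mP)-N\in -p{\mathbb{Q}^*}^2$; for even $m$ one has $mP\in 2E_N(\mathbb{Q})$, so all three classes are trivial.

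The even case I would dispose of immediately using Theorem \ref{first}. If $B_m=z^l$ for even $m$, then $mP\in 2E_N(\mathbb{Q})$ is power integral, so Theorem \ref{first} forces $N=2^a3^b$ with $a\ge 3$. Since here $N=2^ap$ has $2$-adic valuation at most $1$, this is impossible. Thus no even index can produce a perfect power, and in particular $B_m\ne 1$ for even $m$.

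For odd $m$ I would write $x(mP)=-s^2/B_m^2$, $\ x(mP)+N=pw^2/B_m^2$ and $x(mP)-N=-pv^2/B_m^2$ with $s,v,w\in\mathbb{Z}$, so that $-pv^2,\,-s^2,\,pw^2$ form a three-term arithmetic progression with common difference $NB_m^2=2^apz^{2l}$ whenever $B_m=z^l$. The key manipulation is to read $s^2+pw^2=2^ap\,(z^2)^l$ as a representation of a unit times an $l$th power by the form $X^2+pY^2$: factoring in $\mathbb{Z}[\sqrt{-p}]$ and using the near-coprimality of the progression terms (their pairwise $\gcd$s divide $2N$ and each is coprime to $B_m$), I expect that, after absorbing the bounded contribution of $2$ and of the ramified prime $p$, the parametrising quantities become $l$th powers and that the prime $p$ cancels symmetrically against its conjugate. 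Combining this with the linear relation among the three progression terms should then yield a ternary equation of signature $(l,l,l)$ with a coefficient that is a power of $2$ (degenerating to signature $(l,l,2)$ in a few residual configurations).

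At this point I would run the modular method: Wiles' theorem \cite{MR1333035}, Ribet's resolution \cite{MR1438112} of $a^l+2^{\alpha}b^l+c^l=0$, and the variants of Darmon and Merel \cite{MR1468926} eliminate all solutions with $|z|\ge 2$ once $l$ exceeds an explicit small bound, while the finitely many small exponents $l\in\{2,3\}$ are checked directly; the case $z=\pm1$ would force $B_m=1$, which the same equation excludes, reflecting that the only candidate integral points in this coset are $2$-torsion, impossible since $P$ is non-torsion. The main obstacle I anticipate is precisely the bookkeeping in the odd case: arranging the descent so that the factor of $p$ is absorbed into the conductor of the associated Frey-Hellegouarch curve and removed by level-lowering, leaving a clean $(l,l,l)$ (or $(l,l,2)$) equation whose $2$-power coefficient is of the exact shape handled by the cited theorems, and then matching the residual small exponents to an available modularity input. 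By comparison the parity dichotomy and the reduction of the even case to Theorem \ref{first} are routine; the real content is that the prescribed classes $(-1,-p,p)$ are exactly what make the attached Frey curve level-lowerable, so that Fermat's Last Theorem and its variants close the argument.
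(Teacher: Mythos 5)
Your treatment of the even indices (reduce to Theorem \ref{first}) and your identification of the square classes $(\alpha_1,\alpha_2,\alpha_3)=(-1,p,-p)$ for odd indices both agree with the paper. The gap is the pivotal step of the odd case: the assertion that factoring $s^2+pw^2=2^apz^{2l}$ in $\mathbb{Z}[\sqrt{-p}]$ and combining with the linear relation among the progression terms ``should'' yield a ternary equation of signature $(l,l,l)$ with a power-of-$2$ coefficient. Nothing of the sort comes out. The perfect power $B_m=z^l$ enters the three relations only through $NB_m^2=2^apz^{2l}$; the remaining terms are $s^2$, $pv^2$, $pw^2$ --- squares times $-1,\pm p$ --- and there is no reason for any of them to be an $l$th power, so no relation among three coprime $l$th powers is available. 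What the factorization actually produces, after the ideal-to-element step (which already requires $l$ coprime to the class number of $\mathbb{Q}(\sqrt{-p})$; this ring is not a PID in general), is an expression of $w$ and $s/p$ as binary forms of degree $l$ in the coordinates of a parametrizing element of $\mathbb{Z}[\sqrt{-p}]$: a Thue--Mahler-type problem, not a Fermat equation. Moreover, Wiles/Ribet/Darmon--Merel apply only to $U^l+2^rV^l+W^l=0$; the prime $p$ does not ``cancel symmetrically against its conjugate'' in any justified sense, and since the theorem is asserted for every prime $p$, the residual small exponents cannot be ``checked directly'' --- any such check would have to be uniform in $p$.

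What the paper does at this point is genuinely different and is the real content of the proof. From (\ref{e2}) one gets the conic $z_2^2+2p(z_1/p)^2=z_3^2$ (note $p\mid z_1$), which is parametrized via Corollary 6.3.6 of \cite{MR2312337} as $dz_2=s^2-2pt^2$, $dz_1=2pst$, $dz_3=s^2+2pt^2$ with $d\mid 2p$; substituting into (\ref{e4}) gives $s^4+4p^2t^4=B_P^2$ when $d=\pm1$, and $4s^4+p^2t^4=B_P^2$ when $d=\pm2$, the latter becoming $\pm s'^4\pm t'^4=B_P$ after factoring $4s^4=(B_P+pt^2)(B_P-pt^2)$. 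Writing $B_P=z^l$, these are exactly of the shape $A^4+B^2=C^n$ treated by Theorem 1 of Bennett--Ellenberg--Ng \cite{MR2646760}, which excludes all exponents $n\ge 4$ uniformly in $p$, leaving only the square and cube cases, which are classical. That theorem (itself proved by a deep modular argument involving $\mathbb{Q}$-curves) is what replaces your hoped-for $(l,l,l)$ equation; signature-$(l,l,l)$ tools alone cannot close the odd case, which is why the hypotheses on the square classes are aimed at producing the Bennett--Ellenberg--Ng equation rather than a level-lowerable Frey curve of Fermat type.
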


Theorem \ref{second} is proven using Theorem \ref{first} along with an equation recently solved by Bennett, Ellenberg and Ng \cite{MR2646760}.  

\begin{exa} \label{exa8}
There are no perfect powers in the elliptic divisibility sequence generated by 
$(-(60/41)^2,-455700/41^3)$ on $E_5:y^2=x^3-25x$.
\end{exa}

Let $N=2^ap$, where $a=0$ or $1$. Points belonging to two cosets in $E_N(\mathbb{Q})/2E_N(\mathbb{Q})$ have been considered above. The remaining cases lead to equations which currently appear unresolvable in general. As the next example shows, there can be power integral points on $E_N$ which are not integral. However, in the example $N$ is equal to the odd terms of a sequence $(C_m)$ and, since these odd terms form an elliptic divisibility sequence, there are conjectured to be finitely many possibilities with $N$ prime  \cite{MR2548983}. This, along with Theorem~\ref{first} and Theorem \ref{second}, suggests that the number of power integral points in $E_N(\mathbb{Q})$ which are not integral could be uniformly bounded.   

\begin{exa} For $(-1,1)$ on $y^2=x^3-2x$ and $m$ odd write
\[
m(-1,1)=\left(-\frac{A_m^2}{B_m^2}, \frac{A_mC_m}{B_m^3} \right).
\]
We get a power integral point on $E_{C_m}: y^2=x^3-C_m^2x$ given by $x(P)=-(C_mA_m)^2/B_m^4$. Moreover, $C_m$ is prime for $m=3,7$ and $23$.
\end{exa}

\section{Properties of elliptic divisibility sequences}

In this section the required properties of elliptic divisibility sequences are collected.

\begin{lem} \label{edsp} Let $(B_m)$ be an elliptic divisibility sequence. 
\begin{enumerate}[(i)]
\item Let $p$ be a prime and $m_0$ be the smallest positive integer such that $p \mid B_{m_0}$. Then for every $m \in \mathbb{N}$, 
\[
p \mid B_m \iff m_0 \mid m.
\]
\item Let $p$ be an odd prime. For any pair $n,m \in \mathbb{N}$, if $\ord_p(B_n)>0$ then
\[
\ord_p(B_{mn})=\ord_p(B_n)+\ord_p(m).
\] 
\item For any pair $n,m \in \mathbb{N}$, 
if $2 \mid B_n$ then
\[ 
\ord_2(B_{mn}) = \ord_2(B_n)+\ord_2(m) 
\]
if $a_1$ is even and
\[
 \left| \ord_2(B_{mn}) -(\ord_2(B_n)+\ord_2(m)) \right| \le \epsilon
\] 
otherwise, where the constant $\epsilon$ depends only on $E$ and $P$.
\item For all $m,n \in \mathbb{N}$,
\[
\gcd(B_m,B_n)=B_{\gcd(m,n)}.
\]
\end{enumerate}
\end{lem}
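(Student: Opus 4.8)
The common framework for all four parts is reduction theory together with the formal group $\hat{E}$. Fix a prime $p$ and work over $\mathbb{Q}_p$. For a non-identity point $Q$ written as in (\ref{B_P}), the formal parameter $z(Q) = -x(Q)/y(Q) = -A_QB_Q/C_Q$ satisfies $v_p(z(Q)) = \ord_p(B_Q)$, since $v_p(A_Q)=v_p(C_Q)=0$ whenever $v_p(B_Q)>0$. Hence $p \mid B_Q$ if and only if $Q$ lies in the kernel $E_1(\mathbb{Q}_p)$ of reduction modulo $p$, and on $E_1(\mathbb{Q}_p)$ the quantity $\ord_p(B_Q)$ is precisely the level of $Q$ in the filtration of $\hat{E}$. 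I would derive everything from these two observations and the standard description of multiplication-by-$m$ on $\hat{E}$; see \cite{MR1171452, MR961918}.

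For (i), observe that $p \mid B_m$ if and only if $mP \in E_1(\mathbb{Q}_p)$, equivalently $m\tilde{P} = \tilde{O}$ in the group $\tilde{E}_{ns}(\mathbb{F}_p)$ of non-singular points of the reduction. The set of such $m$ is the kernel of the homomorphism $m \mapsto m\tilde{P}$, hence a subgroup of $\mathbb{Z}$, so it equals $m_0\mathbb{Z}$ with $m_0 = \ord(\tilde{P})$ the least positive index for which $p \mid B_{m_0}$. For (ii) and (iii), suppose $\ord_p(B_n)>0$, so $nP \in E_1(\mathbb{Q}_p)$, and write $mnP = [m](nP)$ using that $E_1(\mathbb{Q}_p)$ is stable under $[m]$. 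On $\hat{E}$ one has $[m](z) = mz + (\text{higher order terms})$, and for $p$ odd with $v_p(z)\ge 1$ the elementary estimate $v_p([m]z) = v_p(m) + v_p(z)$ holds; applying it to $z=z(nP)$ gives $\ord_p(B_{mn}) = \ord_p(m) + \ord_p(B_n)$, which is (ii). For $p=2$ the same identity is valid once the level is sufficiently large; when $a_1$ is even the reduction-theoretic constraints force additivity from the lowest level on, giving the clean formula, whereas when $a_1$ is odd the multiplication map contributes a correction confined to the finitely many lowest levels and bounded by a constant $\epsilon$ depending only on $E$ and $P$, which is (iii).

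For (iv) I would argue prime by prime, showing $\ord_p(\gcd(B_m,B_n)) = \min(\ord_p(B_m),\ord_p(B_n))$ equals $\ord_p(B_{\gcd(m,n)})$ for every $p$. Let $g = \gcd(m,n)$ and let $m_0$ be as in (i). If $m_0 \nmid g$ then $m_0$ fails to divide $m$ or $n$, so one of $\ord_p(B_m),\ord_p(B_n)$ vanishes, as does $\ord_p(B_g)$, and both sides are $0$. If $m_0 \mid g$ then, for $p$ odd, part (ii) yields $\ord_p(B_m) = \ord_p(B_{m_0}) + \ord_p(m) - \ord_p(m_0)$ and similarly for $n$ and $g$; taking the minimum and using $\ord_p(g) = \min(\ord_p(m),\ord_p(n))$ gives the claim.

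The main obstacle is the prime $p=2$, where (iii) only provides a bounded-error valuation and so does not directly deliver an exact minimum. Here I would instead use that, for $m_0 \mid m$, the value $\ord_2(B_m)$ depends only on $\ord_2(m)$ and is strictly increasing in it: writing $m=m_0k$ one has $mP = [k](m_0P)$, the odd part of $k$ acts by an automorphism of $\hat{E}$ preserving the level, and $[2]$ raises the level strictly since every term of $[2]z = 2z + c z^2 + \cdots$ has $2$-adic valuation exceeding $v_2(z)$ when $v_2(z)\ge 1$. This monotonicity, which already governs the Fibonacci case, forces $\min(\ord_2 B_m,\ord_2 B_n) = \ord_2 B_g$ and completes the verification; alternatively one may invoke Ward's classical proof that $(B_m)$ is a strong divisibility sequence \cite{MR0023275}.
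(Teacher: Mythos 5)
Your proof is correct and coincides with the standard argument: the paper offers no proof of this lemma at all, simply citing Silverman's primitive divisor paper and Chapter IV of the referenced thesis, and those sources derive the valuation formulas exactly as you do, from the identification of $\ord_p(B_Q)$ with the formal-group level of $Q \in E_1(\mathbb{Q}_p)$ (via $z(Q)=-x(Q)/y(Q)$) together with the expansion of $[m]$ on $\hat{E}$, with the $p=2$, $a_1$ odd anomaly confined to level one. The only step to phrase more carefully is (i) at a prime of bad reduction, where $\tilde{P}$ may be a singular point of the reduced curve so that ``$m\tilde{P}$'' is undefined: argue instead that $\{m \in \mathbb{Z} : mP \in E_1(\mathbb{Q}_p)\}$ is the preimage of a subgroup under the homomorphism $m \mapsto mP$, hence equal to $m_0\mathbb{Z}$ whenever it is nontrivial --- which is also consistent with the paper's remark that $m_0$ is only guaranteed to exist at primes where $P$ has non-singular reduction.
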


\begin{proof}
See \cite{MR961918} and Section 4 in Chapter IV  of \cite{Marcothesis}. 
\end{proof}

\begin{rem} \rm Given a prime $p$, a rational point on (\ref{we}) can be reduced modulo $p$ to give a point defined over a finite field (see Chapter VII of \cite{MR2514094}). It follows that $m_0$ in Lemma~\ref{edsp} exists for every prime at which the generator of the elliptic divisibility sequence has non-singular reduction.    
\end{rem}

\begin{lem} \label{three} Assume that the given Weierstrass equation for $E$ has $a_1$ even. For a prime $p$ suppose that $m_0=p$ in Lemma \ref{edsp}. 
Write $m=p^em'$ where $p \nmid m'$. If $B_m$ is an $l$th power then so is 
$B_{m'}$. Moreover, $p \nmid B_{m'}$.       
\end{lem}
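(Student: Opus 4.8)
The plan is to prove that $B_{m'}$ is an $l$th power by checking, one prime at a time, that $l\mid\ord_q(B_{m'})$ for every prime $q$; for the positive integer $B_{m'}$ this is equivalent to being a perfect $l$th power. The first step, and the only place the hypothesis $m_0=p$ is used, is the ``Moreover'' claim. Since $m_0=p$ and $p\nmid m'$, Lemma~\ref{edsp}(i) gives $p\mid B_{m'}\iff p\mid m'$, and the right-hand condition fails; hence $p\nmid B_{m'}$. This is what makes the prime $p$ harmless in what follows, as it guarantees that every prime dividing $B_{m'}$ is different from $p$.

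Next I fix an arbitrary prime $q$. If $q\nmid B_{m'}$ then $\ord_q(B_{m'})=0$ is divisible by $l$ and there is nothing to prove, so I may assume $q\mid B_{m'}$, whence $q\neq p$ by the previous step. The crux is then a single application of the valuation formula of Lemma~\ref{edsp}, with prime $q$, base index $m'$, and multiplier $p^e$ (so that the product index is $p^em'=m$). Since $\ord_q(B_{m'})>0$, part (ii) applies when $q$ is odd and part (iii) when $q=2$, giving
\[
\ord_q(B_m)=\ord_q(B_{p^em'})=\ord_q(B_{m'})+\ord_q(p^e)=\ord_q(B_{m'}),
\]
the final equality because $q\neq p$ forces $\ord_q(p^e)=0$. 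As $B_m=z^l$ we have $l\mid\ord_q(B_m)=\ord_q(B_{m'})$, and since $q$ was arbitrary this shows $B_{m'}$ is an $l$th power.

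The argument is short, and I do not expect a serious obstacle; the two points needing care are both about legitimacy of the valuation formula. First, for $q=2$ part (ii) is unavailable and one must use part (iii), which delivers the exact identity above only under the standing assumption that $a_1$ is even---this is exactly why that hypothesis appears in the statement, since otherwise part (iii) supplies only an estimate with an error term $\epsilon$ that would spoil the conclusion. Second, the formula requires its base index to have positive valuation, and this is ensured by having reduced to the case $q\mid B_{m'}$ so that $\ord_q(B_{m'})>0$. Everything else is routine.
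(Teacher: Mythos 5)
Your proof is correct and takes essentially the same approach as the paper's: both deduce $p \nmid B_{m'}$ (hence $q \ne p$ for any prime $q \mid B_{m'}$) from Lemma~\ref{edsp}(i) with $m_0=p$, and then apply the valuation formula of Lemma~\ref{edsp}(ii)/(iii) to get $\ord_q(B_m)=\ord_q(B_{m'})+\ord_q(p^e)=\ord_q(B_{m'})$, so that $l$ divides every $\ord_q(B_{m'})$. Your write-up simply makes explicit the points the paper leaves implicit, namely the case split between odd $q$ and $q=2$ and the role of the hypothesis that $a_1$ is even.
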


\begin{proof}
By Lemma \ref{edsp}, if a prime $q$ divides $B_{m'}$ then $q \ne p$ and
\[
\ord_q(B_m)=\ord_q(B_{m'})+\ord_q(p^e)=\ord_q(B_{m'})
\]
so the result follows.
\end{proof}

\begin{defn}
A prime $p | B_m$ such that $p \nmid B_{m'}$ for any $m' < m$ is called a \emph{primitive divisor}.
\end{defn}

\begin{thm}[Silverman] \label{prim} For all but finitely many $m \in \mathbb{N}$, 
$B_m$ has a primitive divisor. Moreover, if $B_m$ does not have a primitive divisor then $m$ is bounded by an effectively computable constant which depends only on $E$ and $P$.
\end{thm}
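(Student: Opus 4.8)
The plan is to compare the size of $B_m$ with the size of its \emph{imprimitive part}, the contribution to $B_m$ of those primes that already divide an earlier term. Write $\hat h$ for the canonical height on $E$ and recall that $\hat h(P)>0$ since $P$ is non-torsion. The first step is the growth estimate
\[
\log B_m = \tfrac12\hat h(P)\,m^2 + o(m^2),
\]
with an \emph{effective} error term. This I would obtain from the decomposition $\hat h(Q)=\sum_v\hat\lambda_v(Q)$ of the canonical height into local heights together with $\hat h(mP)=m^2\hat h(P)$. At a finite place $p$ the local height of $mP$ equals $\ord_p(B_m)\log p$ up to a bounded contribution from the finitely many primes of bad reduction, so summing over finite places gives $\sum_p\hat\lambda_p(mP)=\log B_m+O(1)$; hence $\log B_m = m^2\hat h(P)-\hat\lambda_\infty(mP)+O(1)$. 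It therefore remains to bound the archimedean local height $\hat\lambda_\infty(mP)$, equivalently the distance from $mP$ to the identity in $E(\mathbb{R})$.

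The second step bounds the imprimitive part. Let $m_0(p)$ denote the rank of apparition of $p$ as in Lemma \ref{edsp}, so that a prime $p\mid B_m$ fails to be primitive exactly when $m_0(p)$ is a proper divisor $d$ of $m$. For such $p$ write $m=dk$; by Lemma \ref{edsp}(ii) (and (iii) for $p=2$, which introduces only a bounded error) one has $\ord_p(B_m)=\ord_p(B_d)+\ord_p(k)$. Summing the contributions of all primes with $m_0(p)=d$, and then over the proper divisors $d$ of $m$, gives
\[
\log\Big(\prod_{m_0(p)<m}p^{\ord_p(B_m)}\Big)\le \sum_{\substack{d\mid m\\ d<m}}\big(\log B_d+\log m\big)+O(1).
\]
Using the upper bound $\log B_d\le\tfrac12\hat h(P)d^2+O(1)$ from the first step, the fact that $\tau(m)\log m=o(m^2)$, and the elementary identity
\[
\sum_{\substack{d\mid m\\ d<m}}d^2 = m^2\!\!\sum_{\substack{e\mid m\\ e>1}}\frac1{e^2} < (\zeta(2)-1)\,m^2,
\]
the right-hand side is at most $\tfrac12\hat h(P)(\zeta(2)-1)m^2+o(m^2)$. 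Since $\zeta(2)-1=\pi^2/6-1<1$, this is strictly smaller than $\log B_m=\tfrac12\hat h(P)m^2+o(m^2)$ once $m$ is large. Hence for all large $m$ the primitive part $\prod_{m_0(p)=m}p^{\ord_p(B_m)}$ exceeds $1$, i.e. $B_m$ has a primitive divisor.

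The main obstacle is making the error term in the growth estimate effective, which amounts to an effective upper bound for $\hat\lambda_\infty(mP)$. For the qualitative statement that only finitely many $m$ fail, it is enough to know $\hat\lambda_\infty(mP)=o(m^2)$, which follows from equidistribution of the points $mP$ on $E(\mathbb{R})$. For the effective bound on the exceptional $m$, however, one must rule out that $mP$ approaches the identity too rapidly: since $P$ is non-torsion its elliptic logarithm is irrational, and an effective lower bound for its distance to the lattice — hence an effective bound $\hat\lambda_\infty(mP)=O(\log m)$ — is supplied by lower bounds for linear forms in elliptic logarithms. With this input every constant above, and in particular the threshold beyond which a primitive divisor must occur, is effectively computable in terms of $E$ and $P$, giving the second assertion.
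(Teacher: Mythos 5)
The paper gives no proof of this theorem at all --- it simply cites Silverman \cite{MR961918} and Chapter IV of \cite{Marcothesis} --- and your proposal reconstructs exactly the argument of that cited proof: an asymptotic $\log B_m=\tfrac12\hat h(P)m^2+o(m^2)$ obtained by splitting the canonical height into local heights, the rank-of-apparition identity $\ord_p(B_m)=\ord_p(B_d)+\ord_p(m/d)$ (Lemma \ref{edsp}(ii),(iii)) to bound the imprimitive part by $\sum_{d\mid m,\,d<m}\log B_d+O(\tau(m)\log m)$, and the divisor-sum estimate $\sum_{d\mid m,\,d<m}d^2<(\zeta(2)-1)m^2$ with $\zeta(2)-1<1$ to force a primitive divisor for large $m$. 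The bookkeeping in your second step is correct (the only inputs are the easy, effective upper bound $\log B_d\le\tfrac12\hat h(P)d^2+O(1)$ and the valuation formula), and the reduction of the whole problem to an archimedean bound $\hat\lambda_\infty(mP)=o(m^2)$ is also exactly right.

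There is, however, one genuinely wrong step: the claim that the qualitative bound $\hat\lambda_\infty(mP)=o(m^2)$ ``follows from equidistribution of the points $mP$ on $E(\mathbb{R})$''. Equidistribution is an averaged statement and is perfectly compatible with a sparse sequence of $m$ along which $mP$ returns to the identity super-exponentially fast; compare $n\alpha \bmod 1$ for $\alpha$ a Liouville number, where Weyl equidistribution holds but $\|n\alpha\|$ can be smaller than $e^{-n^2}$ infinitely often, which is precisely the size of violation that would destroy the argument. Ruling out such fast returns for \emph{every} large $m$ is the content of Siegel's theorem ($\lambda_v(Q)=o(h(Q))$), whose classical proof goes through Roth's theorem and is ineffective; this is where the real Diophantine input of the theorem sits, and it cannot be replaced by equidistribution. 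Your proof nevertheless survives, because the ingredient you invoke for the effectivity claim --- lower bounds for linear forms in elliptic logarithms (David) --- already gives $\hat\lambda_\infty(mP)\le C(E,P)(\log m)^{\kappa}$ for \emph{all} $m$, effectively; note only that this is polynomial in $\log m$ rather than the $O(\log m)$ you state, which is immaterial since any power of $\log m$ is $o(m^2)$. So the repair is simply to delete the equidistribution remark and let the elliptic-logarithm bound carry both the qualitative and the effective assertions; with that change your argument is complete and coincides with the proof the paper cites.
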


\begin{proof} See \cite{MR961918} or Chapter IV of \cite{Marcothesis}.   \end{proof}

\begin{rem} \rm For certain minimal Weierstrass equations the number of terms without a primitive divisor has been uniformly bounded (see \cite{MR2301226, MR2605536, IngrSilv}).
\end{rem}   

\section{The modular approach to Diophantine equations}

For a more thorough exploration see \cite{Sanderthesis} and Chapter 15 in \cite{MR2312338}. As is conventional, in what follows all newforms shall have weight $2$ with a trivial character at some level $N$ and shall be thought of as a $q$-expansion
\[
f=q+\sum_{n \ge 2}c_nq^n, 
\]
where the field $K_f=\mathbb{Q}(c_2,c_3,\cdots)$ is a totally real number field. The coefficients $c_n$ are 
algebraic integers and $f$ is called \emph{rational} if they all belong to $\mathbb{Z}$. For a given level $N$, the number of newforms is finite. The modular symbols algorithm \cite{MR1628193}, implemented on $\mathtt{MAGMA}$ \cite{MR1484478} by William Stein, shall be used to compute the newforms at a given level. 

\begin{thm}[Modularity Theorem]
Let $E/\mathbb{Q}$ be an elliptic curve of conductor $N$. Then there exists a newform $f$ of level $N$ such that $a_p(E)=c_p$ for all primes 
$p \nmid N$, where $c_p$ is $p$th coefficient of $f$ and 
$a_p(E)=p+1-\#E(\mathbb{F}_p)$.  
\end{thm}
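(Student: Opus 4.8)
The stated result is the Modularity Theorem (the former Taniyama--Shimura--Weil conjecture), for which no short self-contained argument exists; I would present it as the end-point of the program completed by Wiles \cite{MR1333035}, Taylor--Wiles, and Breuil--Conrad--Diamond--Taylor, and below I sketch the strategy I would follow rather than attempt an original proof. The plan is to attach to $E$ and a prime $\ell$ the mod-$\ell$ representation $\bar\rho_{E,\ell}\colon \gal(\overline{\mathbb{Q}}/\mathbb{Q}) \to \gl_2(\mathbb{F}_\ell)$ on the torsion $E[\ell]$, together with its $\ell$-adic counterpart $\rho_{E,\ell}$ on the Tate module, and to prove that these representations are modular. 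Since $\operatorname{tr}\rho_{E,\ell}(\mathrm{Frob}_p)=a_p(E)$ and $\det\rho_{E,\ell}(\mathrm{Frob}_p)=p$ for every $p\nmid N\ell$, modularity of the representation produces a newform $f$ whose Hecke eigenvalues agree with the $a_p(E)$; the asserted identity $a_p(E)=c_p$ for all $p\nmid N$ then follows, the level being pinned to the conductor $N$ by a conductor computation and the equality of traces on a density-one set of Frobenii by Chebotarev.

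The heart of the matter is a modularity lifting theorem of the shape $R=T$. Here one sets up a universal deformation ring $R$ parametrizing lifts of $\bar\rho_{E,\ell}$ subject to local conditions dictated by the ramification of $E$, together with a Hecke algebra $T$ acting on the corresponding space of modular forms; a surjection $R \twoheadrightarrow T$ is constructed from known modularity, and the goal is to prove it an isomorphism. First I would secure a base case: by Langlands--Tunnell, because the image lands in the solvable group $\gl_2(\mathbb{F}_3)$, the residual representation $\bar\rho_{E,3}$ is modular whenever it is irreducible. Then I would run the Taylor--Wiles patching argument, introducing auxiliary level structure at a carefully chosen finite set of primes, patching the resulting deformation rings and Hecke modules over a growing tower, and invoking a commutative-algebra criterion to force $R=T$; this upgrades modularity of $\bar\rho_{E,3}$ to that of $\rho_{E,3}$, and hence of $E$.

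The main obstacle is exactly this lifting step, together with its failure cases. When $\bar\rho_{E,3}$ is reducible or otherwise unsuitable, I would invoke the ``$3$--$5$ switch'': produce an auxiliary elliptic curve $E'$ with $\bar\rho_{E',5}\cong\bar\rho_{E,5}$ but $\bar\rho_{E',3}$ irreducible and modular, deduce modularity of $\bar\rho_{E,5}$, and re-run the lifting argument at $\ell=5$. The deepest remaining difficulty, settled only in Breuil--Conrad--Diamond--Taylor, is to control the local deformation conditions at $3$ for curves with additive or otherwise bad reduction there, beyond the semistable case handled by Wiles; this requires a delicate study of potentially semistable deformation rings, and it is precisely here that I would expect the genuine work to lie and where I would defer entirely to the cited literature.
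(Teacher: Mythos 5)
Your proposal is correct and matches the paper exactly: the paper's ``proof'' is likewise a citation, attributing the semistable case to Taylor and Wiles \cite{MR1333036, MR1333035} and the completion to Breuil, Conrad, Diamond and Taylor \cite{MR1839918}, which is precisely the literature you defer to. Your additional sketch of the $R=T$ lifting strategy, the Langlands--Tunnell base case, and the $3$--$5$ switch is accurate but goes beyond what the paper records.
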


\begin{proof}
This is due to Taylor and Wiles \cite{MR1333036, MR1333035} in the semi-stable case. The proof was completed by Breuil, Conrad, Diamond and Taylor \cite{MR1839918}.     
\end{proof}

The modularity of elliptic curves over $\mathbb{Q}$ can be seen as a converse to   

\begin{thm}[Eichler-Shimura] Let $f$ be a rational newform of level $N$. There exists an elliptic curve $E/\mathbb{Q}$ of conductor $N$ such that 
$a_p(E)=c_p$ for all primes $p \nmid N$, where $c_p$ is the $p$th coefficient of $f$ and $a_p(E)=p+1-\#E(\mathbb{F}_p)$.
\end{thm}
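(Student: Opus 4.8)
The plan is to realize $E$ geometrically inside the Jacobian of a modular curve, following the classical Eichler--Shimura construction. First I would take as known from the theory of modular curves that $X_0(N)$ is a smooth projective curve over $\mathbb{Q}$ whose genus equals $\dim S_2(\Gamma_0(N))$, and that its Jacobian $J_0(N)=\operatorname{Jac}(X_0(N))$ is an abelian variety over $\mathbb{Q}$ of the same dimension. The Hecke correspondences on $X_0(N)$ induce, by Picard functoriality, an action of the Hecke algebra $\mathbb{T}=\mathbb{Z}[\ldots,T_n,\ldots]$ on $J_0(N)$ by endomorphisms defined over $\mathbb{Q}$. These are the structural inputs on which everything else will rest.

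Next I would build $E$ as a quotient of $J_0(N)$ cut out by $f$. Since $f=q+\sum_{n\ge 2}c_nq^n$ is rational, every $c_n$ lies in $\mathbb{Z}$, so the assignment $T_p\mapsto c_p$ extends to a surjective ring homomorphism $\lambda_f\colon\mathbb{T}\to\mathbb{Z}$. Setting $I_f=\ker\lambda_f$, I would define
\[
E=J_0(N)/I_fJ_0(N).
\]
Because $f$ is a \emph{newform}, the dimension of this quotient equals $[K_f:\mathbb{Q}]$, which is $1$ by rationality, so $E$ is an abelian variety of dimension one over $\mathbb{Q}$, i.e. an elliptic curve. Since $X_0(N)$ has good reduction at every prime $p\nmid N$ (Igusa), so does $E$, and hence its conductor is supported on the primes dividing $N$.

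The crucial point is the identity $a_p(E)=c_p$ for $p\nmid N$, which I would deduce from the Eichler--Shimura congruence relation. On the special fibre of the N\'eron model of $J_0(N)$ at a prime $p\nmid N$ the operator $T_p$ reduces to $F_p+V_p$, where $F_p$ is the $p$-power Frobenius and $V_p$ the Verschiebung, satisfying $F_pV_p=p$ (the diamond operator is trivial here, the character being trivial). Transporting this to the elliptic quotient $E$, on which $T_p$ acts as the scalar $c_p$, forces $F_p$ to satisfy $X^2-c_pX+p=0$ as an endomorphism. Reading off the trace of Frobenius then yields $a_p(E)=p+1-\#E(\mathbb{F}_p)=c_p$, as required.

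It remains to show that the conductor of $E$ is exactly $N$ rather than merely a divisor of $N$, and this is the step I expect to be the main obstacle. At the good primes $p\nmid N$ the congruence relation already shows $p\nmid\operatorname{cond}(E)$, but at the primes $p\mid N$ one must pin down the exact exponent, and there the elementary Jacobian construction is not enough. The argument must instead pass through the compatibility of the Galois representation on the Tate module of $E$ with the local automorphic representation attached to $f$ at $p$, that is, Carayol's local--global compatibility theorem. Because $f$ is assumed to be a newform of level $N$, and not merely an eigenform induced from a smaller level, the local conductor exponents on the two sides agree prime by prime, and one concludes $\operatorname{cond}(E)=N$. This reliance on the deepest local input, as opposed to the uniform treatment of the good primes by the congruence relation, is what makes the conductor statement the delicate part of the proof.
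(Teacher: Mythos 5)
Your proposal is correct and is essentially the same proof as the paper's: the paper simply defers to Chapter 8 of Diamond--Shurman \cite{MR2112196}, which carries out exactly the construction you describe --- the quotient $A_f=J_0(N)/I_fJ_0(N)$ of dimension $[K_f:\mathbb{Q}]=1$, good reduction outside $N$ via Igusa, the Eichler--Shimura congruence relation $T_p=F+V$ forcing $a_p(E)=c_p$ at primes $p\nmid N$, and Carayol's local--global compatibility to pin the conductor down to exactly $N$. You have correctly identified the conductor statement as the step requiring the deepest input, which matches how the cited source treats it.
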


\begin{proof}
See Chapter $8$ of \cite{MR2112196}.
\end{proof}

Given a rational newform of level $N$, the elliptic curves of conductor $N$ associated to it via the Eichler-Shimura theorem shall be computed using $\mathtt{MAGMA}$.  

\begin{prop} \label{levellow} Let $E/\mathbb{Q}$ be an elliptic curve with conductor $N$ and minimal discriminant $\Delta_{\min}$.
Let $l$ be an odd prime and define
\[ 
N_0(E,l):=N/\mathop{\prod_{{\textrm{primes } p \mid \mid N}}}_{l \mid \ord_p(\Delta_{\min})} p. 
\]
Suppose that the Galois representation
\[ 
\rho_l^E: \gal(\bar{\mathbb{Q}}/\mathbb{Q}) \to \aut(E[l])
\]
is irreducible. Then there exists a newform $f$ of level $N_0(E,l)$. Also there exists  a prime $\mathcal{L}$ lying above $l$ in the ring of integers $\mathcal{O}_f$ defined by the coefficients of $f$ such that  
\[
c_p \equiv \left\{ \begin{array}{ll} a_p(E)  \mod \mathcal{L} & \textrm{ if } p \nmid lN, \\ \pm(1+p) \mod \mathcal{L} & \textrm{ if } p \mid \mid N \textrm{ and } p \nmid lN_0, \end{array} \right.
\]
where $c_p$ is the $p$th coefficient of $f$. Furthermore, if $\mathcal{O}_f=\mathbb{Z}$ then
\[
c_p \equiv \left\{ \begin{array}{ll} a_p(E)  \mod l & \textrm{ if } p \nmid N, \\ \pm(1+p) \mod l & \textrm{ if } p \mid \mid N \textrm{ and } p \nmid N_0. \end{array} \right.
\]    
\end{prop}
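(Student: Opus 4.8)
The plan is to prove Proposition \ref{levellow}, which is a level-lowering statement for the mod-$l$ Galois representation $\rho_l^E$ attached to the $l$-torsion of $E$. This is a standard application of Ribet's level-lowering theorem \cite{MR1438112} combined with the Modularity Theorem, so I would present it as an assembly of known results rather than a from-scratch argument. First I would invoke the Modularity Theorem to obtain a newform $g$ of level $N$ with $a_p(E)=c_p(g)$ for all $p \nmid N$; reducing modulo a prime above $l$ gives a mod-$l$ eigenform whose associated residual representation is isomorphic to $\rho_l^E$. Since $\rho_l^E$ is assumed irreducible, Ribet's theorem applies and allows me to strip from the level exactly those primes $p \| N$ at which $\rho_l^E$ is unramified. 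The content of the definition of $N_0(E,l)$ is precisely that one may lower the level at a prime $p \| N$ when $l \mid \ord_p(\Delta_{\min})$: by the theory of the Tate curve (see Chapter V of \cite{MR1312368} or Chapter 15 of \cite{MR2312338}), $\rho_l^E$ is unramified at such a $p$ exactly when $l$ divides the valuation of the minimal discriminant, which governs the $l$-divisibility of the Tate period.

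With the level lowered to $N_0(E,l)$, I would conclude the existence of a newform $f$ of that level and a prime $\mathcal{L} \mid l$ in $\mathcal{O}_f$ such that the residual representation attached to $f$ at $\mathcal{L}$ is isomorphic to $\rho_l^E$. The two displayed congruences then follow from comparing traces of Frobenius. For $p \nmid lN$, both representations are unramified at $p$ and the trace of $\rho_l^E(\mathrm{Frob}_p)$ equals $a_p(E) \bmod l$ on the one hand and $c_p \bmod \mathcal{L}$ on the other, giving $c_p \equiv a_p(E) \pmod{\mathcal{L}}$. For the multiplicative primes $p \| N$ with $p \nmid lN_0$ — that is, the primes removed in passing from $N$ to $N_0$ — the representation $\rho_l^E$ is unramified at $p$ with the trace of Frobenius equal to $\pm(1+p)$; this is the standard description of the Frobenius trace for a representation arising from a curve with multiplicative reduction whose local behaviour has become unramified after level-lowering (the sign records whether the reduction is split or non-split). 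Matching this against $c_p \bmod \mathcal{L}$ yields the second congruence.

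The final assertion, that when $\mathcal{O}_f = \mathbb{Z}$ the congruences hold modulo $l$ rather than modulo $\mathcal{L}$, is immediate: if $f$ is rational then $\mathcal{O}_f = \mathbb{Z}$ and the unique prime $\mathcal{L}$ above $l$ is just $(l)$, so $c_p \equiv a_p(E) \pmod{\mathcal{L}}$ becomes $c_p \equiv a_p(E) \pmod l$, and similarly for the multiplicative case. No separate argument is needed here beyond observing $\mathcal{O}_f/\mathcal{L} = \mathbb{Z}/l\mathbb{Z}$.

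The main obstacle is not any single hard step but rather marshalling the hypotheses so that Ribet's theorem applies cleanly and verifying that the combinatorial definition of $N_0(E,l)$ matches exactly the set of primes at which level-lowering is permitted. In particular I would be careful about the interaction between the irreducibility hypothesis on $\rho_l^E$ (which is what makes the residual representation modular and lets Ribet's machinery run) and the local analysis at primes $p \| N$ via the Tate parametrization, since it is there that the condition $l \mid \ord_p(\Delta_{\min})$ is translated into unramifiedness. The cases $p \mid l$ and primes of additive reduction require a little care — they are deliberately excluded from the level-lowering product by the $p \| N$ and $l \mid \ord_p(\Delta_{\min})$ conditions — so I would make sure the statement of the congruences only concerns primes where the comparison is valid. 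Once these bookkeeping points are settled, the proposition follows from the cited theorems with essentially no further computation.
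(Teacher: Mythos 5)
Your overall route---modularity to get a newform of level $N$, Ribet's level-lowering at the primes $p \parallel N$ with $l \mid \ord_p(\Delta_{\min})$ (translated into unramifiedness of $\rho_l^E$ via the Tate curve), then comparison of Frobenius traces to get the congruences---is exactly the assembly of known results that the paper itself invokes, so the main body of your argument is fine. (A citation quibble: the level-lowering results are Ribet's papers cited in the paper as \cite{MR1047143, MR1265566}, not \cite{MR1438112}, which is the paper used later for the Fermat-type equation.)

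There is, however, a genuine gap in your treatment of the final assertion. You claim that when $\mathcal{O}_f=\mathbb{Z}$ the strengthened congruences are ``immediate'' because the unique prime above $l$ is $(l)$, so that no separate argument is needed. But look at the hypotheses on $p$ in the two displays: in the general case the congruences are asserted only for $p \nmid lN$ (respectively $p \parallel N$ and $p \nmid lN_0$), whereas in the rational case they are asserted for $p \nmid N$ (respectively $p \parallel N$ and $p \nmid N_0$). The strengthening is not the trivial replacement of $\mathcal{L}$ by $(l)$; it is that the congruences now also cover the prime $p=l$ itself (when $l \nmid N$, or when $l \parallel N$ and $l \nmid N_0$). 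Trace-of-Frobenius comparison says nothing at $p=l$, since Frobenius at $l$ is not defined for the mod-$l$ representation in the usual unramified sense; extending the congruence to $p=l$ requires a genuinely different local argument at $l$. This is precisely the contribution of Kraus and Oesterl\'e \cite{MR1166121}, which the paper cites for exactly this point, and which your proposal omits entirely. As written, your proof establishes the first set of congruences but not the second.
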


\begin{proof}
This arose from combining modularity with level-lowering results by Ribet~\cite{MR1047143, MR1265566}. The strengthening in the case $\mathcal{O}_f=\mathbb{Z}$ is due to Kraus and Oesterl{\'e}~\cite{MR1166121}.   
A detailed exploration is given, for example, in Chapter 2 of \cite{Sanderthesis}. 
\end{proof}

\begin{rem} \rm Let $E/\mathbb{Q}$ be an elliptic curve with conductor $N$.
Note that the exponents of the primes in the factorization of $N$ are uniformly bounded (see Section 10 in Chapter IV of \cite{MR1312368}). In particular, only primes of bad reduction divide $N$ and if $E$ has multiplicative reduction at $p$ then $p \mid \mid N$. 
\end{rem}

\begin{cor} \label{lbound} Keeping the notation of Proposition \ref{levellow}, if $p$ is a prime such that $p \nmid lN_0$ and  $p \mid N$ then
\[
l < (1+\sqrt{p})^{2[K_f: \mathbb{Q}]}.
\]    
\end{cor}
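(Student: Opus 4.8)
The plan is to convert the mod-$\mathcal{L}$ congruence provided by Proposition~\ref{levellow} into a divisibility between rational integers by applying the norm $N_{K_f/\mathbb{Q}}$, and then to bound that norm using the Weil (Ramanujan--Petersson) estimate on the coefficients of $f$.

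First I would verify that the second congruence of Proposition~\ref{levellow} is the one that applies. Since $p \mid N$ while $p \nmid N_0$, the definition of $N_0(E,l)$ shows that $p$ is one of the primes removed in passing from $N$ to $N_0$, and every such prime satisfies $p \mid \mid N$; together with the hypothesis $p \nmid lN_0$ this is precisely the case $p \mid \mid N$ and $p \nmid lN_0$, so $c_p \equiv \pm(1+p) \pmod{\mathcal{L}}$. Fixing the sign for which this holds, set $\alpha = c_p \mp (1+p) \in \mathcal{O}_f$, so that $\mathcal{L} \mid \alpha$.

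Next I would pass to norms. As $\mathcal{L}$ lies above $l$, its norm is a power of $l$, so any nonzero element of $\mathcal{O}_f$ divisible by $\mathcal{L}$ has $N_{K_f/\mathbb{Q}}$ divisible by $l$; hence $l \le |N_{K_f/\mathbb{Q}}(\alpha)| = \prod_\sigma |\sigma(\alpha)|$, where $\sigma$ runs over the $[K_f:\mathbb{Q}]$ embeddings of the totally real field $K_f$ into $\mathbb{R}$. Because $p$ does not divide the level $N_0$ of $f$, the Weil bound in weight $2$ gives $|\sigma(c_p)| \le 2\sqrt{p}$ for every $\sigma$, whence
\[
|\sigma(\alpha)| \le |\sigma(c_p)| + (1+p) \le 2\sqrt{p}+1+p = (1+\sqrt{p})^2,
\]
and multiplying over all embeddings yields $|N_{K_f/\mathbb{Q}}(\alpha)| \le (1+\sqrt{p})^{2[K_f:\mathbb{Q}]}$.

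Two small points secure the argument and upgrade this to a strict inequality. The estimate $|\sigma(c_p)| \le 2\sqrt{p} < 1+p$, valid for every prime $p$, forces $\sigma(\alpha) \ne 0$, so $\alpha \ne 0$ and the norm step is legitimate. Moreover $(1+\sqrt{p})^{2[K_f:\mathbb{Q}]}$ is irrational---expanding $((1+p)+2\sqrt{p})^{[K_f:\mathbb{Q}]}$ leaves a strictly positive coefficient of $\sqrt{p}$---while $|N_{K_f/\mathbb{Q}}(\alpha)|$ is a positive integer, so the two cannot coincide and we obtain $l \le |N_{K_f/\mathbb{Q}}(\alpha)| < (1+\sqrt{p})^{2[K_f:\mathbb{Q}]}$. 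The only substantive ingredient is the Weil bound on the conjugates of $c_p$; everything else is bookkeeping, so I would regard confirming that the Ramanujan--Petersson estimate genuinely applies (which it does precisely because $p \nmid N_0$) as the one step deserving care.
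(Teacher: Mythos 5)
Your proof is correct and is essentially the same argument as the one the paper relies on: the paper's ``proof'' is just a citation to Theorem 37 of \cite{Sanderthesis}, and the proof there is precisely your argument---apply the norm $N_{K_f/\mathbb{Q}}$ to the congruence $c_p \equiv \pm(1+p) \pmod{\mathcal{L}}$ supplied by Proposition \ref{levellow}, bound every conjugate by the Weil estimate $|\sigma(c_p)| \le 2\sqrt{p}$ (valid since $p \nmid N_0$), and note $\alpha \ne 0$ because $2\sqrt{p} < 1+p$. Your extra observation that irrationality of $(1+\sqrt{p})^{2[K_f:\mathbb{Q}]}$ upgrades the bound to a strict inequality is a clean way to finish, and nothing is missing.
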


\begin{proof}
See Theorem 37 in \cite{Sanderthesis}.
\end{proof}

Applying Proposition \ref{levellow} to carefully constructed Frey curves has led to the solution of many Diophantine problems. The most famous of these is Fermat's Last theorem \cite{MR1333035} but there are now constructions for other equations and we shall make use of those described below.

\subsection{Recipes for Diophantine equations with signature $(l,l,3)$}
Consider the equation
\[
Ax^l+By^l=Cz^3,
\]
with non-zero pairwise coprime terms and $l \ge 5$ prime. Assume any prime $q$ satisfies $\ord_q(A)<l$, $\ord_q(B)<l$ and $\ord_q(C)<3$. Without lost of generality also assume that $Ax \nequiv 0 \mod 3$ and $By^l \nequiv 2 \mod 3$.  
Construct the Frey curve
\[
E_{x,y}: Y^2+3CzXY+C^2By^lY=X^3. 
\]
\begin{thm}[Bennett, Vatsal and Yazdani \cite{BenDah}] \label{reci}
The conductor $N_{x,y}$ of $E_{x,y}$ is given by
\[
N_{x,y}=3^{\alpha}\rad_3(ABxy)\rad_3(C)^2,
\]
where
\[
\alpha= \left\{ \begin{array}{ll} 2 & \textrm{if } \textrm{ } 9 \mid (2+C^2By^l-3Cz), \\
3 & \textrm{if } \textrm{ } 3\mid \mid (2+C^2By^l-3Cz), \\
4 & \textrm{if } \textrm{ } \ord_3(By^l)=1, \\
3 & \textrm{if } \textrm{ } \ord_3(By^l)=2, \\
0 & \textrm{if } \textrm{ } \ord_3(By^l)=3, \\
1 & \textrm{if } \textrm{ } \ord_3(By^l) \ge 4, \\
5 & \textrm{if } \textrm{ } 3 \mid C.
\end{array} \right.  
\]
Suppose that $E_{x,y}$ does not correspond to one of the
equations
\begin{eqnarray*}
1 \cdot 2^5+27 \cdot (-1)^5 &=& 5 \cdot 1^3, \\ 
1 \cdot 2^7+3 \cdot (-1)^7 &=& 1 \cdot 5^3, \\
2\cdot 1^2+27\cdot (-1)^5&=&25 \cdot (-1)^3, \textrm{ or } \\
2 \cdot 1^7 + 3 \cdot (-1)^7&=&(-1)^3. 
\end{eqnarray*}
Then there exists a newform of level
\[
N_0=3^{\beta}\rad_3(AB)\rad_3(C)^2, 
\]
where
\[
\beta= \left\{ \begin{array}{ll} 2 & \textrm{if } \textrm{ } 9 \mid (2+C^2By^l-3Cz), \\
3 & \textrm{if } \textrm{ } 3\mid \mid (2+C^2By^l-3Cz), \\
4 & \textrm{if } \textrm{ } \ord_3(By^l)=1, \\
3 & \textrm{if } \textrm{ } \ord_3(By^l)=2, \\
0 & \textrm{if } \textrm{ } \ord_3(B)=3, \\
1 & \textrm{if } \textrm{ } \ord_3(By^l)\ge 4 \textrm{ and } \ord_3(B) \ne 3, \\
5 & \textrm{if } \textrm{ } 3 \mid C.
\end{array} \right.  
\]
\end{thm}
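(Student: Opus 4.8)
The plan is to realise this as an instance of the modular method via the Frey curve $E_{x,y}$, so the proof splits into a purely local computation of the conductor (Tate's algorithm) and a global descent (modularity followed by level-lowering). First I would compute the standard invariants of $E_{x,y}: Y^2+3CzXY+C^2By^lY=X^3$. With $a_1=3Cz$, $a_3=C^2By^l$ and $a_2=a_4=a_6=0$ one finds, after substituting the relation $Cz^3-By^l=Ax^l$,
\[
c_4=9C^3z\,(9Ax^l+By^l), \qquad \Delta=27\,AB^3C^8\,x^l y^{3l}.
\]
The shape of $\Delta$ is the whole point: away from $6ABC$ every prime dividing $xy$ occurs to an exponent divisible by $l$, which is exactly what will later be stripped off by level-lowering. (One also checks that $(0,0)$ is an inflection point, so $E_{x,y}$ carries a rational $3$-torsion point; this is harmless for $l\ge5$.)

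For the conductor $N_{x,y}$ I would run Tate's algorithm prime by prime. At a prime $p\ge5$ with $p\mid xy$ and $p\nmid ABC$ the coprimality hypotheses force $\ord_p(c_4)=0$, so the reduction is multiplicative and $p\mid\mid N_{x,y}$; the primes dividing $A$ and $B$ are handled the same way using $\ord_q(A),\ord_q(B)<l$, while a prime $p\ge5$ dividing $C$ contributes additive reduction with exponent $2$. Together these account for the factor $\rad_3(ABxy)\rad_3(C)^2$. The genuinely delicate case is $p=3$: here I would first test minimality and then push Tate's algorithm through, using the normalisations $3\nmid Ax$ and $By^l\not\equiv2\bmod3$ together with the finer invariant $2+C^2By^l-3Cz\bmod9$ to separate the seven cases defining $\alpha$. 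This $3$-adic case analysis, being purely local but heavily case-split, is the most laborious step, though not the conceptually hardest.

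The second half is the existence of the newform at level $N_0$. The key prerequisite is that the mod-$l$ representation $\rho_l^{E_{x,y}}$ on $E_{x,y}[l]$ be irreducible, and \emph{this is the main obstacle}. I would establish it for $l\ge5$ by analysing the possible isogeny characters of a reducible representation: bounds of Mazur type on rational $l$-isogenies reduce the problem to finitely many small cases, and a descent argument shows that each surviving case forces $(x,y,z)$ to satisfy one of the four exceptional identities listed in the statement. Excluding those, $\rho_l^{E_{x,y}}$ is irreducible.

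With irreducibility in hand the conclusion follows from the tools already available. The Modularity Theorem attaches to $E_{x,y}$ a newform of level $N_{x,y}$, and Proposition~\ref{levellow} then produces a newform $f$ of level $N_0=N_0(E_{x,y},l)$. It remains only to evaluate this level: the primes removed are exactly those $p\mid\mid N_{x,y}$ with $l\mid\ord_p(\Delta_{\min})$, which by the formula for $\Delta$ are precisely the primes $p\ge5$ dividing $xy$ (there $\ord_p(\Delta)\in\{l,3l\}$), so the $\rad_3(xy)$ factor disappears and $\rad_3(AB)\rad_3(C)^2$ survives. The passage from $\alpha$ to $\beta$ at the prime $3$ is governed by the same criterion $l\mid\ord_3(\Delta_{\min})$: the two case lists agree except that the controlling quantity switches from $\ord_3(By^l)$ to $\ord_3(B)$, reflecting that the $y^l$-contribution to the $3$-part of $\Delta$ is itself an $l$-th power and is therefore stripped away. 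This yields $N_0=3^{\beta}\rad_3(AB)\rad_3(C)^2$, completing the proof.
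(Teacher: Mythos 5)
First, a point of orientation: the paper itself contains no proof of Theorem \ref{reci} --- it is quoted from the work of Bennett, Vatsal and Yazdani with no proof environment following it --- so your proposal can only be measured against the original published argument. Against that benchmark, your global architecture is the right one and your computations are correct: for $Y^2+3CzXY+C^2By^lY=X^3$ one indeed finds $c_4=9C^3z(9Ax^l+By^l)$ and $\Delta=27AB^3C^8x^ly^{3l}$, the conductor away from $2$ and $3$ follows from exactly the multiplicative/additive dichotomy you describe, and the passage from $N_{x,y}$ to $N_0$ (including the switch from $\ord_3(By^l)$ to $\ord_3(B)$) is governed by the criterion $l\mid\ord_p(\Delta_{\min})$ as you say. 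Two smaller omissions: the prime $2$ is nowhere discussed --- if $2\mid xy$ it behaves like the odd multiplicative primes, but if $2\mid C$ the claim that the conductor exponent is exactly $2$ requires an actual run of Tate's algorithm at $2$, since wild ramification must be ruled out there; and the primes removed by level-lowering are those $p\neq 3$ dividing $xy$, not just those $p\geq 5$.

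The genuine gap is in the irreducibility step, and it sits exactly where you wave it away. You remark that the rational $3$-torsion point $(0,0)$ is ``harmless'', and then propose to prove irreducibility of the mod-$l$ representation for $l\geq 5$ from ``bounds of Mazur type on rational $l$-isogenies'' plus an unspecified descent. This cannot work as stated: Mazur's theorem only bounds the possible $l$, and for $l\in\{5,7,13\}$ the curve $X_0(l)$ has genus zero with infinitely many rational points, so there are infinitely many $j$-invariants of curves admitting a rational $l$-isogeny and hence no finite list of ``surviving cases'' to feed into a descent. The actual proof runs through the $3$-torsion point: it equips $E_{x,y}$ with a rational subgroup of order $3$, so a reducible mod-$l$ representation would produce a rational cyclic subgroup of order $3l$, that is, a non-cuspidal rational point on $X_0(3l)$; for $l\geq 5$ these modular curves have genus at least one, their rational points are completely known, and the non-cuspidal points of $X_0(15)$ and $X_0(21)$ correspond precisely to the four exceptional equations in the statement (note that their exponents are $5$ and $7$). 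So the $3$-torsion is not harmless --- it is the engine of the whole argument, and it is the reason this theorem holds for all $l\geq 5$ whereas Proposition \ref{newform}, whose Frey--Hellegouarch curves carry no such structure, must assume $l>163$.
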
  
    
\subsection{Frey-Hellegouarch curves for Klein forms} \label{FH}

Let $E$ be an elliptic curve defined over $\mathbb{Q}$ with Weierstrass coordinate functions $x,y$. For any integer $n \in \mathbb{Z}$, the $n$th \emph{division polynomial} of $E$ is the polynomial 
$\psi_n \in \mathbb{Q}[x,y] \subset \mathbb{Q}(E)$ as given on p. 39 of \cite{MR1771549}. In particular,
\begin{eqnarray*}
\psi_2^2&=&4x^3 + b_2x^2 + 2b_4x + b_6, \\
\psi_3&=&3x^4 + b_2x^3 + 3b_4x^2 + 3b_6x + b_8,
\end{eqnarray*}
$\psi_n^2 \in \mathbb{Q}[x]$ and there exists $\theta_n \in \mathbb{Q}[x]$ such that
\begin{equation} \label{nQ}
[n]x=\frac{\theta_n}{\psi_n^2}.
\end{equation}
\begin{defn} \label{K} Associate to $\psi_2^2(x)$ and $\psi_3(x)$ the homogeneous polynomials:
\[
K^E_n(x,y)= \left\{ \begin{array}{ll} \psi_2^2(x/y)y^3 & \textrm{ for } $n=2$, \\ \psi_3(x/y)y^4 & \textrm{ for } $n=3$.  \end{array} \right. 
\]
\end{defn}
The notion of a Klein form arose from Klein's  classification \cite{MR0080930} of the finite subgroups of $\aut_{\bar{\mathbb{Q}}}(\mathbb{P}_1)$. For our purposes it is enough to note that any separable cubic binary form in $\mathbb{Q}[x,y]$ is a Klein form and that a separable quartic
\[
\alpha_0x^4+\alpha_1x^3y+\alpha_2x^2y^2+\alpha_3xy^3+\alpha_4y^4 \in \mathbb{Q}[x,y]
\]
is a Klein form precisely when
\begin{equation} \label{transvectant}
12\alpha_0\alpha_4-3\alpha_1\alpha_3+\alpha_2^2= 0.
\end{equation}
\begin{lem} Let $E$ be an elliptic curve defined over $\mathbb{Q}$. Then $K^E_2(x,y)$ and $K^E_3(x,y)$ are Klein forms.
\end{lem}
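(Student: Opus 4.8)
The plan is to handle the two cases by the criteria recorded immediately before the statement: a separable binary cubic is automatically a Klein form, whereas a separable binary quartic is a Klein form precisely when its coefficients satisfy the transvectant relation (\ref{transvectant}). For each of $K^E_2$ and $K^E_3$ I would therefore first confirm separability, and then in the quartic case verify the single algebraic identity.

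For $K^E_2$ the dehomogenization is $\psi_2^2 = 4x^3 + b_2x^2 + 2b_4x + b_6$, whose roots are the $x$-coordinates of the nontrivial $2$-torsion points of $E$. Since $E$ is an elliptic curve these three values are distinct (equivalently $\Delta \ne 0$), so $\psi_2^2$ has no repeated factor and $K^E_2$ is a separable cubic; by the quoted fact it is a Klein form, with nothing more to check.

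For $K^E_3$ I would read off the coefficients of the homogenized quartic $K^E_3 = 3x^4 + b_2x^3y + 3b_4x^2y^2 + 3b_6xy^3 + b_8y^4$, namely $\alpha_0 = 3$, $\alpha_1 = b_2$, $\alpha_2 = 3b_4$, $\alpha_3 = 3b_6$, $\alpha_4 = b_8$. Separability holds as before: the roots of $\psi_3$ are the four distinct $x$-coordinates of the points of exact order $3$ (over $\bar{\mathbb{Q}}$ one has $E[3] \cong (\mathbb{Z}/3\mathbb{Z})^2$ in characteristic $0$). It then remains to evaluate the left-hand side of (\ref{transvectant}):
\[
12\alpha_0\alpha_4 - 3\alpha_1\alpha_3 + \alpha_2^2 = 36b_8 - 9b_2b_6 + 9b_4^2 = 9\left(4b_8 - b_2b_6 + b_4^2\right).
\]

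The computation collapses to the standard identity $4b_8 = b_2b_6 - b_4^2$ relating the $b$-invariants of a Weierstrass equation, which makes the bracketed expression vanish and thereby confirms (\ref{transvectant}). Consequently $K^E_3$ is a separable quartic satisfying the transvectant condition, hence a Klein form. The only step with any content is recognizing this identity; it is a routine consequence of the definitions $b_2 = a_1^2 + 4a_2$, $b_4 = 2a_4 + a_1a_3$, $b_6 = a_3^2 + 4a_6$, $b_8 = a_1^2a_6 + 4a_2a_6 - a_1a_3a_4 + a_2a_3^2 - a_4^2$, so rather than expand it by hand I would cite it from the standard references on Weierstrass equations already listed in the introduction.
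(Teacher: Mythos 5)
Your proposal is correct and follows essentially the same route as the paper: establish separability of both forms (the paper cites separability of the multiplication-by-$n$ map, you cite the equivalent fact that the nontrivial $2$- and $3$-torsion points have distinct $x$-coordinates), then verify the transvectant condition (\ref{transvectant}) for the quartic. You have merely made explicit the ``small calculation'' the paper leaves to the reader, correctly reducing it to the standard invariant identity $4b_8 = b_2b_6 - b_4^2$.
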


\begin{proof}
Since the multiplication by $n$ map is separable (see Chapter III of \cite{MR2514094}), $K^E_n(x,y)$ is separable. A small calculation checks that the coefficients of $K^E_3(x,y)$ satisfy (\ref{transvectant}).
\end{proof}
For $S$ a fixed finite set of primes, let
\[
\mathbb{Z}_S := \{ x \in \mathbb{Q} : \ord_p(x) \ge 0 \textrm{ for all } p \notin S   \}
\]
and let $\mathbb{Z}_S^*$ be the set of units in $\mathbb{Z}_S$. Let 
$F$ be a Klein form of degree $k \in \{3,4,6,12\}$ 
($k=3$ or $4$ is enough for our purposes). The \emph{index} of $F$ is $n=6-12/k$. Denote by 
$\Delta_F$ the discriminant of $F$ and let $S_F$ be the set of primes which divide $n\Delta_F$. In \cite{BenDah, Sanderthesis} Bennett and Dahmen construct a Frey-Hellegouarch curve for the Diophantine equation
\begin{equation} \label{power}
F(A,B)=uC^l, 
\end{equation}
where $\gcd(A,B)=1$, $C \ne 0$, $l$ is prime and $u \in \mathbb{Z}_{S_F}^*$. Define
\[
H(x,y)=\frac{1}{(k-1)^2} \left| \begin{array}{cc} F_{xx} & F_{xy} \\ F_{xy} & F_{yy} \end{array} \right|
\]
and the Jacobian determinant of $F$ and $H$ by
\[
G(x,y)=\frac{1}{k-2} \left| \begin{array}{cc} F_{x} & F_{y} \\ H_{x} & H_{y} \end{array} \right|,
\]
where $F_x$, $F_y$, etc, refer to corresponding partial
derivatives. Then
\[
4H(A,B)^3+G(A,B)^2=d_nF(A,B)^n,
\]
where $d_2=-27\Delta_F$ and $d_3=2^8\sqrt{-\Delta_F/27}$ are integers. So
\[
E_{A,B}: Y^2=X^3+3H(A,B)X+G(A,B)
\]
has discriminant $-2^4 \cdot 3^3d_nF(A,B)^2$. 
\begin{prop}[\cite{BenDah}] \label{t}
There exists $t \in \{\pm 1, \pm 3\}$ such that for all primes $p \notin S_F$ 
we have that the quadratic twist
\[
E_{A,B}^{(t)}: Y^2=X^3+3H(A,B)t^2X+G(A,B)t^3
\]
is semistable at $p$ and 
\[
\ord_p(\Delta_{min}(E_{A,B}^{(t)}))=n\ord_p(F(A,B)).
\] 
\end{prop}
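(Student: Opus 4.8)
The plan is to read off the usual invariants of the model $E_{A,B}$ and its twists, to settle every prime $p\ge5$ directly from the syzygy $4H(A,B)^3+G(A,B)^2=d_nF(A,B)^n$, and then to fix $t$ by a local computation at the one small prime that can escape $S_F$. Writing $F=F(A,B)$, $H=H(A,B)$, $G=G(A,B)$, the model $Y^2=X^3+3HX+G$ has $c_4=-144H$, $c_6=-864G$ and $\Delta=-2^4\cdot3^3d_nF^n$, and twisting by $t$ scales these by $t^2$, $t^3$ and $t^6$. Since every admissible $t\in\{\pm1,\pm3\}$ is divisible by no prime $\ge5$, twisting changes neither the reduction type nor $\ord_p(\Delta_{\min})$ at any $p\ge5$; so at those primes I may work with $E_{A,B}$ itself, and any $t$ will serve.

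First I would handle a prime $p\ge5$ with $p\notin S_F$. Then $p\nmid\Delta_F$, hence $p\nmid d_n$ and $p\nmid2^4\cdot3^3$, so $\ord_p(\Delta)=n\ord_p(F)$. If $p\nmid F$ the reduction is good. If $p\mid F$, I claim $p\nmid H$: otherwise, as $\gcd(A,B)=1$, the point $(A:B)$ would be a common zero over $\bar{\mathbb{F}}_p$ of $\bar F$ and of its Hessian, and a binary form has a common root with its Hessian only when it has a repeated root, forcing $p\mid\Delta_F$ and hence $p\in S_F$, a contradiction. (For $p\ge5$ the normalizing denominators $1/(k-1)^2$ and $1/(k-2)$ are units, so the covariant identity survives reduction mod $p$.) Thus $p\nmid c_4$, the reduction is multiplicative, and $p\nmid c_4$ already forces the model to be minimal at $p$; so $\ord_p(\Delta_{\min})=\ord_p(\Delta)=n\ord_p(F)$.

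It remains to pick $t$ at the small primes. Because $n\in\{2,3\}$ puts $2\in S_F$ in the cubic case and $3\in S_F$ in the quartic case, at most one small prime survives: $p=3$ when $F$ is cubic, $p=2$ when $F$ is quartic. In the cubic case the surplus factor $d_2=-27\Delta_F$ gives $\ord_3(\Delta)=6+2\ord_3(F)$ while $\ord_3(c_4)\ge2$, so $E_{A,B}$ is not semistable at $3$ as it stands. Twisting by $t=\pm3$ raises the invariants enough to permit a minimisation by $u=3$; and since $3\nmid\Delta_F$ keeps $1/(k-1)^2=1/4$ a unit mod $3$, the common-root argument still gives $3\nmid H$ whenever $3\mid F$. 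After minimising one therefore finds $\ord_3(c_4)=0$ when $3\mid F$, so the reduction is multiplicative with $\ord_3(\Delta_{\min})=2\ord_3(F)$, while $3\nmid F$ gives good reduction. Hence $t=3$ works at $p=3$, and with the previous paragraph it works for every $p\notin S_F$.

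The quartic case at $p=2$ is where I expect the genuine difficulty. Every admissible $t$ is odd, so twisting does not move any $2$-adic valuation; the four choices instead realise the distinct quadratic twist classes at $2$, and the task is to single out the one whose minimal model acquires good or multiplicative reduction with $\ord_2(\Delta_{\min})=3\ord_2(F)$. The obstruction is that the Jacobian $G$ carries the denominator $1/(k-2)=1/2$, so modulo $2$ the clean covariant geometry behind the cases above degenerates and the common-root argument is no longer available. I would instead run Tate's algorithm directly on each twist, using that $2\nmid\Delta_F$ keeps $\bar F$ separable mod $2$ (so $(A:B)$ meets at most a simple root), and check that exactly one twist class yields a semistable minimal model of the prescribed discriminant valuation. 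Showing that a uniform $t\in\{\pm1,\pm3\}$ always suffices, rather than one depending on finer $2$-adic congruences of the coefficients of $F$, is the crux of the Bennett--Dahmen construction and the step I would treat most carefully.
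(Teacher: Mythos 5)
The paper itself offers no argument for this statement: its ``proof'' is the citation to Proposition 4.2 of \cite{BenDah}, so your attempt has to be judged against the Bennett--Dahmen argument it would need to reconstruct. Your generic analysis is correct and complete: for $p\ge5$ with $p\notin S_F$, unit twists are harmless, separability of $\bar F$ modulo $p$ prevents $F$ and its Hessian from vanishing together at the primitive point $(A,B)$, so $p\nmid c_4=-144H(A,B)$, which gives minimality and good or multiplicative reduction with the stated discriminant valuation. The first genuine gap is at $p=3$ in the cubic case. Your pivotal step --- that twisting by $\pm3$ ``raises the invariants enough to permit a minimisation by $u=3$'' --- is not a valid principle at $p=3$: the inequalities $\ord_3(c_4)\ge4$, $\ord_3(c_6)\ge6$, $\ord_3(\Delta)\ge12$ are necessary but \emph{not} sufficient for non-minimality there. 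By Kraus's criterion, an integral model with invariants $(c_4/3^4,c_6/3^6)$ exists if and only if $\ord_3(c_6/3^6)\ne2$, which for your twisted curve reads $\ord_3(G(A,B))\ne2$. The syzygy alone cannot deliver this: $4H^3+G^2=-27\Delta_FF^2$ is perfectly consistent with $\ord_3(H)=1$, $\ord_3(G)=2$ and $3\nmid F\Delta_F$, and in that configuration \emph{every} $t\in\{\pm1,\pm3\}$ would fail the proposition (for $t=\pm1$ the model is minimal with additive reduction because $\ord_3(c_6)=5<6$; for $t=\pm3$ Kraus forces minimality, giving additive reduction with $\ord_3(\Delta_{\min})=12\ne 2\ord_3(F)$). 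What closes this hole is covariant information you never invoke: since $\gcd(A,B)=1$ one may move $(A,B)$ to $(1,0)$ by an element of $\gl_2(\mathbb{Z}_3)$, under which $H$ and $G$ scale by unit powers of the determinant, and then, writing $F=a_0x^3+a_1x^2y+a_2xy^2+a_3y^3$, one computes $H(1,0)=3a_0a_2-a_1^2$ and, up to sign, $G(1,0)=2a_1^3-9a_0a_1a_2+27a_0^2a_3$, so that $\ord_3(G(A,B))$ is either $0$ or at least $3$. With that lemma your conclusion that $t=\pm3$ works at $3$ is indeed correct, but without it the argument does not close.

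The second gap you concede yourself: the quartic case at $p=2$ is not a refinement to be ``treated most carefully'' later --- it is the heart of the proposition, and your proposal contains no proof of it. Note that $\{\pm1,\pm3\}$ represents exactly the four unit classes of $\mathbb{Q}_2^*$ modulo squares (with $-3\equiv5$ giving the unramified twist), so the choice of $t$ genuinely matters $2$-adically, and one must prove that at least one class yields a model semistable at $2$ with $\ord_2(\Delta_{\min})=3\ord_2(F(A,B))$. That requires Kraus's existence conditions at $p=2$ (congruences on $c_4$ and $c_6$ modulo $32$) combined with a $2$-adic analysis of the covariants of quartic Klein forms, where the defining relation $12\alpha_0\alpha_4-3\alpha_1\alpha_3+\alpha_2^2=0$ enters essentially; this constitutes the bulk of the proof of Proposition 4.2 in \cite{BenDah}. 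In sum: the generic primes are handled correctly, the $p=3$ case has a repairable but genuine gap, and the $p=2$ case is absent, so the proposal does not prove the proposition.
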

\begin{proof}
This is Proposition 4.2 of \cite{BenDah}.
\end{proof}

\begin{prop}[\cite{BenDah}] \label{newform} Let $l>163$ in (\ref{power}) and let $t$ be as in Proposition \ref{t}. Denote by $N_{A,B}$ the conductor of $E_{A,B}^{(t)}$. 
Then the Galois representation   
\[
\rho_l^{A,B}: \gal(\bar{\mathbb{Q}}/\mathbb{Q}) \to \aut(E_{A,B}^{(t)}[l])
\]
is modular of level
\begin{equation} \label{N_0}
N_0=\prod_{p \mid n\Delta_F}p^{\ord_p(N_{A,B})}. 
\end{equation}
In particular, there exists a newform $f$ of level $N_0$.    
\end{prop}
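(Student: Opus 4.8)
The plan is to derive the statement from the Modularity Theorem together with the level-lowering result of Proposition \ref{levellow}, applied to the semistable model $E_{A,B}^{(t)}$ furnished by Proposition \ref{t}. Since Proposition \ref{levellow} requires the mod-$l$ representation to be irreducible, the first task is to verify that hypothesis, and it is exactly here that the threshold $l>163$ enters. Once irreducibility is in hand, the remaining work is a bookkeeping of conductor exponents, with the essential simplification being that at every prime outside $S_F$ the $l$-divisibility of $\ord_p(\Delta_{\min})$ is forced by the shape of the equation (\ref{power}).

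First I would show that $\rho_l^{A,B}$ is irreducible for all $A,B$ when $l>163$. If the representation on $E_{A,B}^{(t)}[l]$ were reducible, it would admit a $\gal(\bar{\mathbb{Q}}/\mathbb{Q})$-stable line, that is, a rational cyclic subgroup of order $l$, and hence $E_{A,B}^{(t)}$ would define a non-cuspidal rational point on $X_0(l)$. By Mazur's theorem together with the resolution of the remaining sporadic cases, the set of primes $l$ for which some elliptic curve over $\mathbb{Q}$ carries a rational $l$-isogeny is finite with largest member $163$. Consequently, for $l>163$ no such isogeny exists, so $\rho_l^{A,B}$ is automatically irreducible for \emph{every} admissible $A,B$, including the curves with complex multiplication; no further arithmetic hypothesis on the Frey curve is needed.

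Next, by the Modularity Theorem $E_{A,B}^{(t)}$ is modular, so Proposition \ref{levellow} applies and yields a newform of level $N_0(E_{A,B}^{(t)},l)$. It remains to identify this level with (\ref{N_0}). The decisive computation is at the primes $p\nmid n\Delta_F$, i.e. $p\notin S_F$. For such $p$, Proposition \ref{t} guarantees that $E_{A,B}^{(t)}$ is semistable, so that $p\mid\mid N_{A,B}$ whenever $p\mid N_{A,B}$, and that $\ord_p(\Delta_{\min}(E_{A,B}^{(t)}))=n\,\ord_p(F(A,B))$. Because $u\in\mathbb{Z}_{S_F}^*$ is a unit at $p$ and $F(A,B)=uC^l$, we get $\ord_p(F(A,B))=l\,\ord_p(C)$, whence $\ord_p(\Delta_{\min}(E_{A,B}^{(t)}))=nl\,\ord_p(C)$ is divisible by $l$. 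Thus every prime $p\nmid n\Delta_F$ dividing $N_{A,B}$ meets the condition $l\mid\ord_p(\Delta_{\min})$ in the definition of $N_0(E,l)$ and is stripped away by level-lowering, leaving a conductor supported exactly on the primes dividing $n\Delta_F$, namely $\prod_{p\mid n\Delta_F}p^{\ord_p(N_{A,B})}$. By Proposition \ref{levellow} this produces the asserted newform.

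The main obstacle is the irreducibility step: the entire threshold $l>163$ rests on the classification of rational isogenies (Mazur's theorem and its completion in the sporadic CM cases), which is the one genuinely deep input; everything downstream is a direct reading of reduction types through Proposition \ref{t}. A secondary point demanding care is the behaviour at the finitely many primes $p\mid n\Delta_F$: one must confirm that level-lowering does not also remove some of these (which could occur if $p\mid C$ forces $l\mid\ord_p(\Delta_{\min})$ at a prime of multiplicative reduction), so strictly the stated $N_0$ should be read as a multiple of the true level, with equality following from a closer analysis of the reduction types at the primes of $S_F$.
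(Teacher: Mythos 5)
The paper offers no argument of its own for this proposition---its ``proof'' is a one-line citation to Proposition 8.1 of \cite{BenDah}---and your reconstruction (irreducibility of $\rho_l^{A,B}$ for $l>163$ via Mazur's isogeny theorem, which is exactly where that threshold comes from; modularity; then Ribet level-lowering as in Proposition \ref{levellow}, using the semistability and discriminant relation of Proposition \ref{t} to show every prime outside $S_F$ is stripped from the conductor) is precisely the argument of the cited source, so your proposal is correct and takes essentially the same route. Your closing caveat is also the right reading rather than a defect: level-lowering by itself only produces a newform whose level divides (\ref{N_0}) and is supported on the primes of $n\Delta_F$, and that weaker statement (finitely many possible $f$, with $p_0\nmid lN_0$) is exactly what is used later in the proof of Theorem \ref{2or3}.
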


\begin{proof}
This is Proposition 8.1 in \cite{BenDah}.
\end{proof}

\subsection{A similar Frey curve for cubic forms}
The Frey curve already given in Section~\ref{FH} can be seen as sufficient; however, for ease of reference we give a construction from~\cite{MR2406491}.

Let
\[
F(x,y)=t_0a^3+t_1^2y+t_2xy^2+t_3y^3 \in \mathbb{Z}[x,y]
\]
be a separable cubic binary form. 
In \cite{MR2406491} a Frey curve is given for the Diophantine equation
\begin{equation} \label{cub}
F(a,b)=dc^l,
\end{equation}
where $\gcd(a,b)=1$, $d \in \mathbb{Z}$ is fixed and $l \ge 7$ is prime. 
Define a Frey curve $E_{a,b}$ by
\begin{equation} \label{BillFrey}
E_{a,b}: y^2=x^3+a_2x^2+a_4x+a_6,
\end{equation}
where
\begin{eqnarray*}
a_2 &=& t_1a -t_2b, \\
a_4 &=& t_0t_2a^2 +(3t_0t_3 -t_1t_2)ab + t_1t_3b^2, \\
a_6 &=& t_0^2t_3a^3-t_0(t_2^2-2t_1t_3)a^2b+t_3(t_1^2-2t_0t_2)ab^2-t_0t_3^2b^3.
\end{eqnarray*}
Then $E_{a,b}$ has discriminant $16 \Delta_F F(a,b)^2$. Consider the Galois representation
\[
\rho_l^{a,b} : \gal(\bar{\mathbb{Q}}/\mathbb{Q}) \to \aut(E_{a,b}[l]).
\]
\begin{thm}[\cite{MR2406491}] \label{Bill} Let $S$ be the set of primes dividing $2d\Delta_F$. 
There exists a constant $\alpha(d,F) \ge 0$ such that if $l>\alpha(d,F)$ 
and $c \ne \pm 1$ then:
\begin{itemize}
\item the representation $\rho_l^{a,b}$ is irreducible;
\item at any prime $p \notin S$ dividing $F(a,b)$ the equation (\ref{BillFrey}) is minimal, the elliptic curve $E_{a,b}$ has multiplicative reduction and $l \mid \ord_p(\Delta_{min}(E_{a,b}))$.   
\end{itemize} 
\end{thm}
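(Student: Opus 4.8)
The statement bundles two independent assertions: the local description of the bad reduction of $E_{a,b}$ at primes $p\notin S$ dividing $F(a,b)$ (the second bullet), which is a computation with the Weierstrass invariants, and the irreducibility of $\rho_l^{a,b}$ (the first bullet), which rests on a deep global input. The plan is to dispose of the reduction bullet first, since it is precisely what makes the irreducibility argument go through.

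For the reduction statement I would compute the invariants $c_4$, $c_6$ and $\Delta$ of the model (\ref{BillFrey}); the discriminant is $16\Delta_F F(a,b)^2$, and a short calculation shows that $c_4$ is a fixed nonzero integer multiple of the Hessian of $F$ evaluated at $(a,b)$. Since the resultant of a cubic and its Hessian is, up to a bounded constant, a power of $\Delta_F$, and since $\gcd(a,b)=1$, any prime dividing both $F(a,b)$ and $c_4$ divides $\Delta_F$ up to that constant and so lies in $S$. Hence for $p\notin S$ with $p\mid F(a,b)$ we have $\ord_p(c_4)=0$ and $\ord_p(\Delta)=2\ord_p(F(a,b))>0$, which forces multiplicative reduction. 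Because $\ord_p(c_4)=0$, the model (\ref{BillFrey}) is automatically minimal at $p$ (a non-minimal model would require $\ord_p(c_4)\ge 4$), so $\ord_p(\Delta_{\min}(E_{a,b}))=2\ord_p(F(a,b))$. Finally, (\ref{cub}) together with $p\nmid d$ gives $\ord_p(F(a,b))=l\,\ord_p(c)$, whence $l\mid\ord_p(\Delta_{\min}(E_{a,b}))$.

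For irreducibility I would argue by contradiction: if $\rho_l^{a,b}$ is reducible then $E_{a,b}$ admits a rational subgroup of order $l$, i.e. a rational $l$-isogeny over $\mathbb{Q}$. By Mazur's theorem on rational isogenies of prime degree, for $l>163$ this can occur only if $E_{a,b}$ has complex multiplication. But a curve with complex multiplication has integral $j$-invariant, and hence potentially good reduction at every prime, contradicting the multiplicative reduction established in the second bullet whenever $F(a,b)$ has a prime factor $p\notin S$. Thus, assuming $\rho_l^{a,b}$ reducible and $l>163$, every prime dividing $c$ must already lie in $S$, so by (\ref{cub}) the value $F(a,b)=dc^l$ is supported on the fixed finite set $S$. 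The Thue--Mahler theorem then leaves only finitely many coprime pairs $(a,b)$, and for each of these $|F(a,b)|=|d|\,|c|^l$ is a fixed nonzero integer while $c\ne\pm 1$, which bounds $l$. Taking $\alpha(d,F)$ to be the maximum of $163$ and this Thue--Mahler bound yields the claim.

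The main obstacle is the irreducibility bullet, and specifically the reliance on Mazur's classification of rational isogenies to force complex multiplication for $l>163$; the remaining delicate point is the uniformity of the Thue--Mahler bound in the complex-multiplication case, which is what ultimately pins down $\alpha(d,F)$ independently of the solution $(a,b,c)$. By contrast, the reduction bullet is a finite covariant computation together with the resultant identity for $F$ and its Hessian, and is essentially the same calculation that underlies the discriminant formula $16\Delta_F F(a,b)^2$. A more hands-on alternative to invoking Mazur would analyze the isogeny character locally and note that the multiplicative reduction with $l\mid\ord_p(\Delta_{\min})$ forces it to be unramified outside $S\cup\{l\}$, but for the purposes of this theorem the global isogeny bound is both cleaner and sufficient.
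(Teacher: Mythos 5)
Your proposal cannot be judged ``the same approach as the paper,'' because the paper offers no proof at all: its entire justification is the citation ``This is Theorem 2.3 and Lemma 2.4 in \cite{MR2406491}.'' What you have done is reconstruct a proof of the cited result, and the reconstruction is sound. Your treatment of the second bullet is the right computation: for the model (\ref{BillFrey}) one finds
\[
c_4=-4\bigl(F_{xx}F_{yy}-F_{xy}^2\bigr)(a,b),
\]
a fixed nonzero multiple of the Hessian $H$ of $F$ evaluated at $(a,b)$, and since $\mathrm{Res}(F,H)$ is an invariant of the binary cubic $F$ of degree $8$ in its coefficients, while the invariant ring of binary cubics is generated by $\Delta_F$ (degree $4$), one has $\mathrm{Res}(F,H)=\pm 2^{6}\Delta_F^{2}$ (the constant is pinned down by evaluating on $x^3+y^3$). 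With $\gcd(a,b)=1$ this forces any common prime divisor of $F(a,b)$ and $c_4$ into $S$; then $\ord_p(c_4)=0<4$ gives minimality at $p$, $p\mid\Delta$ with $p\nmid c_4$ gives multiplicative reduction, and $\ord_p(\Delta_{\min})=2\,\ord_p(F(a,b))=2l\,\ord_p(c)$ gives the divisibility, exactly as you say; note this bullet needs no lower bound on $l$. The one place where your argument is needlessly roundabout is irreducibility: Mazur's isogeny theorem asserts that the only primes $l$ for which \emph{some} elliptic curve over $\mathbb{Q}$ admits a rational $l$-isogeny are $2,3,5,7,11,13,17,19,37,43,67,163$, so for $l>163$ reducibility of $\rho_l^{a,b}$ is impossible outright, CM or not, and one may take the absolute constant $\alpha=163$ --- this is precisely why Proposition \ref{newform}, quoted from \cite{BenDah}, is stated for $l>163$. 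Your weakened reading of Mazur (``for $l>163$, only with CM'') is true, and your compensating chain --- CM forces integral $j$-invariant, hence no multiplicative reduction, hence by the second bullet $F(a,b)$ is supported on $S$, hence finitely many coprime $(a,b)$ by Thue--Mahler, hence $l$ bounded since $|c|\ge 2$ --- is logically valid, so your proof stands. The comparison is that your route buys nothing extra and costs something: it yields a $(d,F)$-dependent and, as stated, a priori ineffective constant where the full form of Mazur's theorem gives an absolute and explicit one.
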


\begin{proof}
This is Theorem 2.3 and Lemma 2.4 in \cite{MR2406491}.
\end{proof}
     
\section{Proof of Theorem \ref{2or3}}

\begin{proof}[Proof of Theorem \ref{2or3}]
Let $n=2$ or $3$ and let $S$ be the set of primes dividing $n \Delta_E$.
Assume that $B_m$ is an $l$th power. Note that by Theorem 1.1 in \cite{MR2365225} it is enough to bound $l$ in terms of $E$ and $P$. To do this we shall derive an equation of the form (\ref{power}) and prove the existence of a prime divisor $p_0$ to which Corollary \ref{lbound} can be applied.    

Using Theorem \ref{prim}, fix  $e_0 \ge 1$  
such that
\begin{itemize}
\item $B_{n^{e_0}}$ is divisible by a prime $p_0 \nmid n\Delta_E$,
\item $p_0 \nmid B_{n^e}$ for all $0 \le e <e_0$. 
\end{itemize}
Note that $e_0$ does not depend on $m$. From Lemma \ref{edsp}, since $\ord_n(B_1)>0$,
\[
\ord_n(B_{m}) -(\ord_n(B_1)+\ord_n(m)) = O(1).
\]
Hence, since $l \mid \ord_n(B_{m})$, we can assume that $l$ is large enough so that 
\begin{equation} \label{e_0}
\ord_n(m) \ge e_0.
\end{equation}   
For $Q \in E(\mathbb{Q})$, using (\ref{nQ}) gives
\begin{equation} \label{2Q}
\frac{A_{nQ}}{B_{nQ}^2}=\frac{\theta_n(A_Q/B_Q^2)}{\psi_n^2(A_Q/B_Q^2)}=
\frac{B_Q^{2n^2}\theta_n(A_Q/B_Q^2)}{B_Q^2 \psi_n^2(A_Q/B_Q^2)B_Q^{2(n^2-1)}},
\end{equation}
where 
\[
\psi_n^2(A_Q/B_Q^2)B_Q^{2(n^2-1)}=\left\{ \begin{array}{ll} K_2^E(A_Q,B_Q^2) & 
\textrm{ if } n=2, \\ (K_3^E(A_Q,B_Q^2))^2 & \textrm{ if } n=3   \end{array} \right.
\]
(see Definition \ref{K}). 
\begin{comment}
\frac{A_Q^4-b_4A_Q^2B_Q^4-2b_6A_QB_Q^6-b_8B_Q^8}
{B_Q^2(4A_Q^3+b_2A_Q^2B_Q^2+2b_4A_QB_Q^4+b_6B_Q^6)}
\end{comment}
Since $\theta_n$ is monic and the leading coefficient of $\psi_n^2$ is $n^2$, 
$B_Q$ is coprime with the numerator of (\ref{2Q}) and if $B_{nQ}$ is an $l$th power then $B_Q$ is a power of $n$ multiplied by an $l$th power. Write $m=n^{\ord_n{m}}m'$ with $n \nmid m'$. From (\ref{e_0}) it follows that 
$B_{n^{e_0}m'}$ is a power of $n$ multiplied by an $l$th power. 
Write $Q=n^{e_0-1}m'P$ then $n^{e_0}m'P=nQ$. The primes which divide the numerator and the denominator of (\ref{2Q}) also divide the discriminant $\Delta_E$ (see~\cite{MR1185022}). So
\begin{equation} \label{K_n}
K_n^E(A_Q,B_Q^2)=uC^l,
\end{equation}
where $u \in \mathbb{Z}_S^*$. Moreover, $p_0 \nmid B_Q$ 
(since $\gcd(B_Q,B_{n^{e_0}})=B_{n^{e_0-1}}$) and so 
$C \in \mathbb{Z}$ is divisible by $p_0$. In characteristic away from $n$ the multiplication by $n$ map is separable (see Chapter III of \cite{MR2514094}) so the set of primes which divide the discriminant of $K_n^E$ is equal to $S$. Applying Proposition \ref{newform} shows that there exists a newform $f$ of level $N_0$ (as in (\ref{N_0})). It follows that there are finitely many choices for $f$. We have $p_0 \nmid lN_0$ and 
$p_0 \mid N_{A_Q,B_Q}$ (see Proposition \ref{t}) so Corollary \ref{lbound} bounds $l$. \end{proof}

\begin{rem} \rm
Note that $K_n^E(A_Q,B_Q^2)$, as in (\ref{K_n}), does not belong to $\mathbb{Z}_S^*$ so Theorem 8.1 in~\cite{BenDah} along with Silverman's
primitive divisor theorem proves the existence of an effectively computable bound for $l$ which depends only on $E$ and $P$. However, keeping in mind that  $p_0 \nmid lN_0$, in practice a much better bound is obtained by computing the newforms at level $N_0$ and applying Proposition \ref{levellow} directly.  
\end{rem}

\begin{rem} \label{Sext} \rm Let $S$ be a finite set of fixed primes and let $(B_m)$ be an elliptic divisibility sequence whose first term is divisible by $2$ or $3$ . The results in Section \ref{FH} hold with the primes in $S$ added to $S_F$. Using this the proof above can be extended to show that there are finitely terms in  $(B_m)$ equal to a perfect power multiplied by an $S$-unit.        
\end{rem}

\section{The Mordell curves $y^2=x^3+D$} \label{mdc}

\begin{proof}[Proof of Theorem \ref{Mor}]
Write $D=d^2D'$, where $D'$ is square free. Suppose that $P \in E(\mathbb{Q})$ with $x(P) \ne 0$ and $B_P=z^l$ for some prime $l$. Factorizing over $K=\mathbb{Q}(\sqrt{D'})$,
\[
A_P^3=C_P^2-Dz^{6l}=(C_P+d\sqrt{D'}z^{3l})(C_P-d\sqrt{D'}z^{3l}). 
\]

If $D'=1$ then $C_P+dz^{3l}=ua^3$ and $C_P-dz^{3l}=vb^3$, where $a,b \in \mathbb{Z}$ are coprime, $u,v$ divide $2d$ and $uv$ is a cube. Subtracting the two factors gives
\begin{equation} \label{D'=1}
2dz^{3l}=ua^3-vb^3.
\end{equation}

In general the ring $\mathcal{O}_{KT}$ is a principal ideal domain for some finite set $T$ of prime ideals. Include in $T$ the primes in $\mathcal{O}_K$ dividing 
$2d\sqrt{D'}$. Using Dirichlet's unit theorem, $\mathcal{O}_{KT}^*/{\mathcal{O}_{KT}^*}^3$ is a finite set. Hence, if $D' \ne 1$ then 
\[
C_P+d\sqrt{D'}z^{3l}=(u+\sqrt{D'}v)(a+b\sqrt{D'})^3,
\]
where $a,b \in \mathbb{Z}$ are coprime and there are finitely many choices for $u,v \in \mathbb{Q}$. Subtracting the two conjugate factors gives
\begin{equation} \label{1}
dz^{3l}=va^3+3ua^2b+3D'vab^2+uD'b^3. 
\end{equation}

Now suppose that $(B_m)$ is an elliptic divisibility sequence generated by a point on $E$. Multiplying through by the denominators of $u,v$ in (\ref{D'=1}) or (\ref{1}) gives an equation
\[
F(a,b)=dc^l
\] 
as in (\ref{cub}) with $c^l=B_m^3$. Note that $u$ and $v$ are non-zero in (\ref{D'=1}) and at least one of $u,v$ is non-zero in (\ref{1}); it follows that the cubic forms considered are separable. Construct a Frey curve 
$E_{a,b}$ as in (\ref{BillFrey}). Let $S$ be the set of primes dividing 
$2d\Delta_F$.
  
Assume that $n \mid B_1$ and $n>1$ is prime. Using the Siegel-Mahler theorem about finiteness of $S$-integral points on elliptic curves, fix $e_0 \ge 1$ such that $B_{n^{e_0}}$ is divisible by a prime $p_0 \notin S$. Note that $e_0$ does not depend on $m$. From Lemma \ref{edsp}, since $\ord_n(B_1)>1$,
\[
\ord_n(B_{m}) -(\ord_n(B_1)+\ord_n(m)) = O(1).
\]
Hence, since $l \mid \ord_n(B_{m})$, we can assume $l$ is large enough so that 
\[
\ord_n(m) \ge e_0.
\]    
Then $B_{n^{e_0}} \mid B_m$ and, in particular, $p_0 \mid B_m$. Applying 
Theorem \ref{Bill} and Proposition \ref{levellow} with $p=p_0$ gives that 
$l$ is bounded. (Note that $p_0$ divies the conductor of the Frey curve but
not the level of the newform.) The finiteness claim follows from Theorem 1.1 in \cite{MR2365225}. \end{proof}

Let $D \in \mathbb{Z}$ be square-free. For $P \in 2E(\mathbb{Q})$ write $P=2Q$. Using the duplication formula,
\begin{equation} \label{2Q2}
\frac{A_P}{B_P^2}=\frac{A_Q(A_Q^3-8DB_Q^6)}{4B_Q^2(A_Q^3+DB_Q^6)}=\frac{A_Q(A_Q^3-8DB_Q^6)}{4B_Q^2C_Q^2}.
\end{equation}
Any prime dividing $C_Q$ and $A_Q^3-8DB_Q^6$ also divides $3D$. Suppose that $B_P$ is an $l$th power and that $B_Q$ is even. Since $D$ is square-free, $\gcd(A_Q,C_Q)=1$ so only 
$3$ can divide both $C_Q$ and the numerator of (\ref{2Q2}). If $\gcd(3,C_Q)=1$ then $C_Q$ and $2B_Q$ must be an $l$th powers.

\begin{proof}[Proof of Proposition \ref{D=11}]
Note that $E(\mathbb{Q})=\left< (-7/4,19/8) \right>$. Let $P=m(-7/4,19/8)$ for some $m \ge 1$ and denote $B_P$, as in (\ref{B_P}), by $B_m$. Assume that 
$B_P$ is an $l$th power. Using Lemma \ref{three} we can assume that $3 \nmid B_P$ and $3 \nmid m$. From Lemma \ref{edsp},
\begin{equation} \label{ord}
\ord_2(B_m)=\ord_2(B_1)+\ord_2(m)=1+\ord_2(m) \ge l
\end{equation}
so $m$ is even. Thus $P=2Q$ for some $Q \in E(\mathbb{Q})$. 
By (\ref{2Q2}) it follows that $C_Q$ and $2B_Q$ are $l$th powers.
 
\begin{lem} \label{tec}
If $l>2$ then $13, 19$ and $619$ divide $B_Q$. Also $7 \mid A_Q$ but 
$7 \nmid B_QC_Q$.   
\end{lem}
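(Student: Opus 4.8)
The plan is to recast the statement in terms of ranks of apparition of the divisibility sequence generated by the fixed generator $G=(-7/4,19/8)$, for which $B_1=2$, $A_1=-7$ and $C_1=19$. Write $m=2k$, so that $Q=kG$ and $B_Q=B_k$. I first extract the two constraints on $k$ already available from the earlier part of the proof of Proposition \ref{D=11}: since $3\nmid m$ we have $3\nmid k$, and since $\ord_2(B_m)=1+\ord_2(m)\ge l$ with $l>2$ we get $\ord_2(m)\ge l-1\ge 2$, so that $4\mid m$ and in particular $2\mid k$.

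The computational core is to determine, for each of the primes $7,13,19,619$, the order of the reduced point $\bar{G}$, which by Lemma \ref{edsp}(i) is the smallest index $m_0(p)$ with $p\mid B_{m_0(p)}$. I would check that $m_0(19)=2$ (indeed $B_2=2^2\cdot 19$), that $m_0(7)=3$ (here $\bar{G}=(0,5)$ modulo $7$ because $7\mid A_1$), and that $m_0(13)=m_0(619)=19$. For the last two this reduces to verifying $13\mid B_{19}$ and $619\mid B_{19}$: modulo $13$ one finds $\#E(\mathbb{F}_{13})=19$, so $\bar{G}$ necessarily has order $19$, while modulo $619$ one checks directly that $B_{19}\equiv 0$.

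The divisibilities then follow by a short chain. Since $2\mid k$ and $m_0(19)=2$, Lemma \ref{edsp}(i) gives $19\mid B_Q$, and Lemma \ref{edsp}(ii) applied with base term $\ord_{19}(B_2)=1$ yields $\ord_{19}(B_Q)=1+\ord_{19}(k)$. As $2B_Q$ is an $l$th power, this valuation is divisible by $l\ge 3$, which forces $\ord_{19}(k)\ge l-1>0$ and hence $19\mid k$. By Lemma \ref{edsp}(iv) this gives $B_{19}\mid B_k=B_Q$, and since $13$ and $619$ both divide $B_{19}$ we obtain $13\mid B_Q$ and $619\mid B_Q$, while $19\mid B_Q$ was already shown. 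For the assertions about $7$, the facts $3\nmid k$ and $m_0(7)=3$ give $7\nmid B_Q$; reducing modulo $7$, the point $\bar{Q}=k\bar{G}$ equals $\pm\bar{G}$ because $\bar{G}$ has order $3$ and $3\nmid k$, so $\bar{Q}$ has $x$-coordinate $0$ and $y$-coordinate $\pm5\ne 0$, whence $7\mid A_Q$ and $7\nmid C_Q$.

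I expect the main obstacle to be the fortunate coincidence that $13$ and $619$ share the rank of apparition $19$: the whole conclusion hinges on these two primes dividing the single term $B_{19}$, so the genuine work is the explicit verification of $13\mid B_{19}$ and especially $619\mid B_{19}$. The one conceptual subtlety is the step that promotes $19\mid B_Q$ to $19\mid k$, where the $l$th-power hypothesis on $2B_Q$ is essential; without it the chain linking $19$ to $13$ and $619$ does not close.
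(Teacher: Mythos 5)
Your strategy for $19$, $13$ and $7$ is sound and essentially matches the paper's (indeed $B_2=2^2\cdot 19$, $\#E(\mathbb{F}_{13})=19$ forces $\bar{G}$ to have order $19$ modulo $13$, and $\bar{G}\equiv(0,5)$ has order $3$ modulo $7$), but your computational claim for $619$ is false, and this is a genuine gap. The rank of apparition of $619$ is $13$, not $19$. Reducing $G=(-7/4,19/8)$ modulo $619$ gives $\bar{G}=(153,544)$, and a direct computation yields $2\bar{G}=(355,287)$, $4\bar{G}=(178,75)$, $8\bar{G}=(324,332)$, hence $12\bar{G}=8\bar{G}+4\bar{G}=(153,75)=(153,-544)=-\bar{G}$, so $13\bar{G}=O$. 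Thus $619\mid B_{13}$, and by Lemma \ref{edsp}(i) the indices $n$ with $619\mid B_n$ are exactly the multiples of $13$; since $13\nmid 19$ we get $619\nmid B_{19}$. So the step ``since $13$ and $619$ both divide $B_{19}$'' fails: your chain proves $13\mid B_Q$ but never reaches $619\mid B_Q$. Your closing paragraph correctly identifies $619\mid B_{19}$ as the crux of your argument, but that is precisely the verification that does not hold.

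The repair is the extra iteration that the paper performs. Once you know $19\mid k$ (in your notation $Q=kG$, $B_Q=B_k$) and hence $B_{19}\mid B_Q$, you have $13\mid B_Q$; now run the same valuation argument again at $p=13$: by Lemma \ref{edsp}(ii) with base index $19$, using $\ord_{13}(B_{19})=1$, one gets $\ord_{13}(B_Q)=1+\ord_{13}(k/19)$, and since $l$ divides this valuation and $l\ge 3$, it forces $13\mid k$. Then $B_{13}\mid B_k=B_Q$ by Lemma \ref{edsp}(iv), and the correct computational fact $619\mid B_{13}$ gives $619\mid B_Q$. In other words, the three primes enter through a two-step cascade of indices $2\to 19\to 13$ (each prime found in $B_Q$ pushes a new prime into the index $k$), rather than through the single term $B_{19}$ as you propose; the fortunate coincidence you rely on does not occur.
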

\begin{proof}
If $l>2$ then (\ref{ord}) gives that $m=4m'$ thus $Q=2m'(-7/4,19/8)$ for some 
$m' \ge 1$ so $B_2 \mid B_Q$ and, in particular, $19 \mid B_Q$. 
Using Lemma \ref{edsp} again,
\[
\ord_{19}(B_Q)=\ord_{19}(B_2)+\ord_{19}(m')=1+\ord_{19}(m')=l
\]
so $\ord_{19}(m')>0$ and, in particular, $13 \mid B_Q$. 
Similarly, $B_{13} \mid B_Q$ so $619 \mid B_Q$.

Since $7 \mid B_3$ and $\gcd(B_P,B_3)=2$, 
we have that $7 \nmid B_P$ so, from (\ref{2Q2}), $7 \nmid B_QC_Q$. 
Reducing the equation $C_Q^2-11B_Q^6=A_Q^3$ modulo $7$ shows that 
$A_Q \equiv 0 \mod 7$.    \end{proof}       

Assume that $l \ge 5$. Consider the $(l,l,3)$ triple given by
\[
C_Q^2-11B_Q^6=A_Q^3, 
\]
where the three terms are pairwise coprime. As in Theorem \ref{reci} construct a Frey Curve
\[
E_Q: Y^2+3A_QXY-11B_Q^6Y=X^3
\]
with conductor $N_Q=3^{\alpha} \cdot 11\rad_3(C_QB_Q)$, where 
$\alpha=2$ or $3$. The Galois representation 
$\rho_l^{E_Q}: \gal(\bar{\mathbb{Q}}/\mathbb{Q}) \to \aut (E_Q[l])$ arises from a cuspidal newform $f$ of weight $2$ and level $N_0=2\cdot 3^{\alpha} \cdot 11$. This newform is one of
\begin{eqnarray*}
f_1 &=& q-q^2+q^4+2q^7-q^8+q^{11}+\cdots, \\
f_2 &=& q-q^2+q^4+4q^5-2q^7-q^8-4q^{10}-q^{11} + \cdots, \\
f_3 &=& q - q^2 + q^4 - 2q^5 - 4q^7 - q^8 + 2q^{10} + q^{11} + \cdots, \\
f_4 &=& q + q^2 + q^4 + 2q^7 + q^8 - q^{11} + \cdots, \\
f_5 &=& q + q^2 + q^4 + 2q^7 + q^8 + q^{11}+ \cdots,
\end{eqnarray*}
for $\alpha=2$ or (up to conjugacy) one of
\begin{eqnarray*}
f_6 &=& q - q^2 + q^4 - 2q^5 + q^7 - q^8 + 2q^{10} - q^{11} + \cdots \\
f_7 &=& q - q^2 + q^4 + q^5 + 4q^7 - q^8 - q^{10} - q^{11} + \cdots \\
f_8 &=& q - q^2 + q^4 - 3q^5 - 4q^7 - q^8 + 3q^{10} - q^{11} + \cdots \\
f_9 &=& q - q^2 + q^4 - 2q^5 - q^7 - q^8 + 2q^{10} + q^{11} + \cdots \\
f_{10} &=& q + q^2 + q^4 + 2q^5 - q^7 + q^8 + 2q^{10} - q^{11} + \cdots \\
f_{11} &=& q + q^2 + q^4 - q^5 + 4q^7 + q^8 - q^{10} + q^{11} + \cdots \\
f_{12} &=& q + q^2 + q^4 + 2q^5 + q^7 + q^8 + 2q^{10} + q^{11} + \cdots \\
f_{13} &=& q + q^2 + q^4 + 3q^5 - 4q^7 + q^8 + 3q^{10} + q^{11} + \cdots \\
f_{14} &=& q - q^2 + q^4 + \theta q^5 + 2q^7 - q^8 - \theta q^{10} + q^{11} + \cdots \\
f_{15} &=& q + q^2 + q^4 + \theta q^5 + 2q^7 + q^8 + \theta q^{10} - q^{11} + \cdots
\end{eqnarray*}
for $\alpha=3$, where the last two are defined over a quadratic number field. Applying Proposition \ref{levellow} with $p=13$, $19$, $619$ gives $l=5$ if 
$f=f_2$ and $l<5$ (a contradiction) otherwise. If $f=f_2$ then applying Proposition 4.2 in \cite{MR2098394} with $p=5$ gives a contradiction; 
note that $5$ is a prime of good reduction and $f_2$ is rational so the restriction 
$l \ne 5$ in the Proposition can be removed. 
 
To eliminate the possibilities of $l=2$ or $3$ consider the parameterizations given in (\ref{1}) with $D=11$. Then $K=\mathbb{Q}(\sqrt{11})$ and 
$\mathcal{O}_K=\mathbb{Z}[\sqrt{11}]$ is a principal ideal domain with fundamental unit $10+3\sqrt{11}$. Also 
$2\sqrt{11}=(10-3\sqrt{11})\sqrt{11}(3+\sqrt{11})^2$. It follows that 
$(u,v)=(1,0), (10,3), (10,-3), (199,60)$ or $(199,-60)$.   

If $(u,v)=(1,0)$ then $C_Q=a(a^2+33b^2)$ and $B_Q^3=b(3a^2+11b^2)$,
where $b$ is even (since $4 \mid B_Q^3$). Since $33 \nmid C_Q$, $a$ is an $l$th power. Write $a=C^l$. Since $2 \mid b$ and $3 \nmid b$, $b=2^{3(l-1)}B^{3l}$.
Write $a^2+33b^2=\bar{C}$ and $3a^2+11b^2=\bar{B}$. Then
\begin{eqnarray}
3C^{2l}+2^{6(l-1)}11B^{6l}&=&\bar{B}^{3l}, \\
C^{2l}+2^{6(l-1)}33B^{6l}&=&\bar{C}^l, \\
\bar{B}^{3l}-3\bar{C}^l&=&2^{6l-3}11B^{6l}, \label{j3} \\
\bar{C}^l-3\bar{B}^{3l} &=&-8C^{2l}, \label{j4}
\end{eqnarray}
where the terms in each of the ternary equations are nonzero and pairwise coprime. If $l=2$ then (\ref{j3}) becomes $-3\bar{C}^2+(\bar{B}^2)^3-2^311(2B^2)^6=0$ and Proposition 6.5.9. in 
\cite{MR2312337} gives a rational point on the elliptic curve given by
$Y^2=X^3-2376$, but there are no such points. If $l=3$ then (\ref{j4}) 
becomes $\bar{C}^3+3(-\bar{B}^3)^3+(2C^2)^3=0$ and Proposition 6.4.14 in \cite{MR2312337} gives a rational point with non-zero coordinates on the elliptic curve given by $Y^2=X^3+144$, but there are no such points.     
 
For the other parameterizations details are given only for $(u,v)=(10,3)$. The other cases are similar. Assume that $(u,v)=(10,3)$. 
Then $A_Q=a^2-11b^2$,
\[
B_Q^3=3a^3+30a^2b+99ab^2+110b^3
\]
and 
\[
C_Q=10a^3+99a^2b+330ab^2+363b^3.
\]
Suppose that $l=2$. Since $C_Q$ and $2B_Q$ are squares, multiplying the
two expressions gives a rational point on the hyperelliptic curve
\[
F: Y^2=60X^6 + 1194X^5 + 9900X^4 + 43780X^3+108900X^2+144474X + 79860.
\]
But computations implemented in $\mathtt{MAGMA}$ confirm that the Jacobian of $F$ has rank $0$ and, via the method of Chabauty, $F(\mathbb{Q})$ is empty.
Finally, suppose that $l=3$. By Lemma \ref{tec}, $A_Q \equiv 0 \mod 7$. 
Hence, $a/b \equiv 2 \textrm{ or } 5 \mod 7$. 
Substituting these in the parametrization of $B_Q^3$ shows that 
$a/b \equiv 5 \mod 7$, but this cannot be a solution if $C_Q$ is a cube. This completes the proof of Proposition \ref{D=11}. 
\end{proof} 
    
\begin{comment}
If $(u,v)=(10,-3)$ 
\[
B_Q^3=-3a^3 + 30a^2b - 99ab^2 + 110b^3
\]
and 
\[
C_Q=10a^3 - 99a^2b + 330ab^2 - 363b^3.
\]
Suppose that $l=2$. Since $C_Q$ and $2B_Q$ are squares, multiplying the
two expressions gives a rational point on the Hyper elliptic curve
\[
F: Y^2=-60x^6 + 1194x^5 - 9900x^4 + 43780x^3 - 108900x^2+144474x - 79860
\]
But computations implemented in $\mathtt{MAGMA}$ confirm that the Jacobian of $F$ has rank $0$ and, via the method of Chabauty, $F(\mathbb{Q})$ is empty.
If $(u,v)=(199,-60)$ then $A_Q=a^2-11b^2$,
\[
B_Q^3=-60a^3 + 597a^2b - 1980ab^2+2189b^3
\]
and
\[
C_Q=199a^3 - 1980a^2b + 6567ab^2 - 7260b^3
\]
\[
Y^2=23880X^6 + 475206X^5 + 3940200X^4 + 17424220X^3+43342200X^2 + 57499926X + 31784280
\]
\end{comment}
  
By (\ref{D'=1}) and (\ref{1})  we see that Conjecture \ref{con1} would follow from 

\begin{con}[\cite{MR2406491}] \label{2.1} Let $F$ be a seperable homogenous cubic binary form with integer coefficients, $d$ a fixed integer $\ge 1$ and $l$ a prime number. There exists a constant $C_{d,F}>0$ depending only on $d$ and $F$ such that if $l>C_{d,F}$ and 
\[
F(a,b)=dc^l 
\]
with $\gcd(a,b)=1$ then $c=\pm 1$.
\end{con}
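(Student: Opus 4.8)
The plan is to treat Conjecture \ref{2.1} by the modular method built from the Frey curve of Theorem \ref{Bill}, reducing the equation $F(a,b)=dc^l$ to a finite list of newforms and then attempting to bound $l$ via Corollary \ref{lbound}. Suppose for contradiction that $c \ne \pm 1$ and that $l$ exceeds the constant $\alpha(d,F)$ of Theorem \ref{Bill} (in particular $l \ge 7$). Form the Frey curve $E_{a,b}$ as in (\ref{BillFrey}), and let $S$ be the set of primes dividing $2d\Delta_F$. By Theorem \ref{Bill} the representation $\rho_l^{a,b}$ is irreducible, and $E_{a,b}$ has multiplicative reduction with $l \mid \ord_p(\Delta_{\min}(E_{a,b}))$ at exactly the primes $p \notin S$ dividing $c$.

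Next I would level-lower $\rho_l^{a,b}$ by Proposition \ref{levellow}. Every prime of multiplicative reduction dividing $c$ is stripped away, so the resulting newform $f$ has level $N_0$ supported only on the primes dividing $2d\Delta_F$, with exponents bounded independently of the solution. Thus $N_0$, the finite set of candidate newforms $f$, and in particular the degrees $[K_f:\mathbb{Q}]$, all depend only on $d$ and $F$.

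The concluding step would be Corollary \ref{lbound}, applied to a prime $p_0 \notin S$ dividing $c$: such a $p_0$ divides $F(a,b)$ and hence the conductor of $E_{a,b}$, but not $N_0$, and provided $p_0 \nmid l$ the corollary yields $l < (1+\sqrt{p_0})^{2[K_f:\mathbb{Q}]}$. If such a $p_0$ could be chosen of size bounded purely in terms of $d$ and $F$, this would contradict $l > C_{d,F}$ for a suitable $C_{d,F}$ and finish the proof.

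The hard part — and the reason Conjecture \ref{2.1} is open — is producing a \emph{bounded} auxiliary prime $p_0$. Here the contrast with Theorems \ref{2or3} and \ref{Mor} is decisive. In those settings the elliptic divisibility sequence carried enough structure that Silverman's primitive divisor theorem (Theorem \ref{prim}), respectively the Siegel--Mahler theorem, forced a \emph{fixed} prime $p_0 \notin S$, of size controlled by $E$ and $P$, into the perfect-power base for every large $l$; Corollary \ref{lbound} then bounded $l$ at once. A general separable cubic form $F$ with a general coprime pair $(a,b)$ offers no such mechanism: the base $c$ can be a single large prime, or composed entirely of primes already lying in $S$ (for instance a power of $2$, since $2 \in S$), in which case no small prime outside $S$ divides $F(a,b)$ and Corollary \ref{lbound} cannot be applied. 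One is then thrown back on eliminating the finitely many newforms by hand, comparing $a_p(E_{a,b})$ with $c_p$ at auxiliary primes $p$ through the congruences of Proposition \ref{levellow}; but this cannot be carried out uniformly in $d$ and $F$, since newforms attached to elliptic curves reproducing the reduction behaviour of the trivial solutions $c=\pm 1$ pass every such test and cannot be separated from hypothetical large solutions. Closing this gap is precisely what the conjecture demands.
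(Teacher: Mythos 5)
This statement is a \emph{conjecture}, and the paper contains no proof of it; the only thing the paper says about its status is that, as explained in \cite{MR2406491}, Conjecture \ref{2.1} would follow from the Frey--Mazur conjecture \cite{MR1357210}, and that via the parametrizations (\ref{D'=1}) and (\ref{1}) it would in turn imply Conjecture \ref{con1}. So you were right not to manufacture a purported proof, and your write-up has to be judged as a diagnosis of the obstruction rather than measured against a hidden argument in the paper.

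As a diagnosis it is accurate. The reduction you describe (Theorem \ref{Bill} followed by Proposition \ref{levellow}) is exactly what the machinery yields: for $l$ large and $c \ne \pm 1$, the representation $\rho_l^{a,b}$ is irreducible and arises from one of finitely many newforms at a level supported on the primes dividing $2d\Delta_F$, so the level and the degrees $[K_f:\mathbb{Q}]$ depend only on $d$ and $F$. You also put your finger on the real gap: Corollary \ref{lbound} needs a prime $p_0 \nmid lN_0$ dividing the conductor of $E_{a,b}$ whose size is bounded independently of the solution, and for a general coprime pair $(a,b)$ nothing produces one --- if $c$ is a single huge prime, the bound $l < (1+\sqrt{p_0})^{2[K_f:\mathbb{Q}]}$ is vacuous. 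This is precisely the structural input that the elliptic divisibility sequence supplies in Theorems \ref{2or3} and \ref{Mor}, where Silverman's primitive divisor theorem (Theorem \ref{prim}), respectively Siegel--Mahler, forces a \emph{fixed} $p_0 \notin S$ into the power for all large $l$. Two refinements you could add. First, the subcase where $c$ is composed entirely of primes in $S$ is not actually part of the difficulty: there $F(a,b)$ lies in $d$ times the $S$-units, a Thue--Mahler equation with finitely many solutions $(a,b)$, and each solution bounds $l$; the essential problem is a large prime factor of $c$ outside $S$. Second, the scenario you describe as unremovable by congruence tests --- the mod-$l$ representation of $E_{a,b}$ agreeing, for arbitrarily large $l$, with that of a fixed elliptic curve of conductor $N_0$ attached to a rational newform --- is exactly what the Frey--Mazur conjecture forbids: $E_{a,b}$ has multiplicative reduction at any prime $p \notin S$ dividing $c$, hence cannot be isogenous to a curve of conductor supported on $S$, while Frey--Mazur asserts that for $l$ beyond a uniform bound, isomorphic mod-$l$ representations force an isogeny. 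That is the conditional route the paper points to, and it confirms that the gap you identified is the entire content of the conjecture.
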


In \cite{MR2406491} it is explained that Conjecture \ref{2.1} would follow from the Frey-Mazur conjecture.

\begin{rem} \rm
A more direct Frey curve for $C_P^2=A_P^3+DB_P^6$ with $B_P$ an $l$th power is
\[
E_P: Y^2=X^3-3A_PX+2C_P.
\]
However, parametrizing as above highlights the connection with cubic binary forms and, as in the proof of Proposition \ref{D=11}, helps resolve specific cases. 
\end{rem}
          
\section{The congruent number curves $y^2=x^3-(2^ap^b)^2x$} \label{red}

Let $E_N$ be the elliptic curve given by $y^2=x^3-N^2x$, where $N$ is 
a congruent number. For a non-torsion point $P \in E_N(\mathbb{Q})$ there exists non-zero integers 
$z_1,z_2,z_3$ so that
\begin{eqnarray*}
A_P&=&\alpha_1 z_1^2, \\
A_P+NB_P^2&=&\alpha_2 z_2^2, \\
A_P-NB_P^2&=&\alpha_3 z_3^2,
\end{eqnarray*}
where the $\alpha_i$ are square free. Note that $\alpha_1 \mid N$, $\alpha_2 \mid 2N$ and $\gcd(z_1^2,z_2^2) \mid N$. So, in particular, $\gcd(z_1,z_2)=1$ if $N$ is square free. We have
\begin{eqnarray}
\alpha_2z_2^2-\alpha_1z_1^2 &=& NB_P^2; \label{e1} \\
\alpha_1z_1^2-\alpha_3z_3^2 &=& NB_P^2; \label{e3} \\
\alpha_2z_2^2-\alpha_3z_3^2 &=& 2NB_P^2; \label{e4} \\ 
2\alpha_1z_1^2-\alpha_2z_2^2&=&\alpha_3z_3^2. \label{e2}
\end{eqnarray}

\begin{thm}
Suppose that $l$ is an odd prime, $r$ is a non-negative integer and $U,V,W$ are non-zero pairwise coprime integers with
\begin{equation} \label{lem}
U^l+2^rV^l+W^l=0.
\end{equation}
Then $r=1$ and $(U,V,W)=\pm(-1,1,-1)$.
\end{thm}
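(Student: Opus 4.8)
The plan is to reduce the equation to three Fermat-type equations, each already resolved in the work cited in the introduction, the trichotomy being governed by the $2$-adic valuation of the middle term. First I would normalize so that the variable carrying the power of $2$ is odd, since this is the shape in which the quoted theorems are phrased: writing $k=\ord_2(V)$ and $V=2^kV_0$ with $V_0$ odd, the equation becomes
\[
U^l+2^{r'}V_0^l+W^l=0,\qquad r'=r+kl,
\]
where $U,V_0,W$ are again nonzero and pairwise coprime (each pairwise gcd only shrinks under the substitution). It therefore suffices to classify the solutions of this reduced equation while tracking $r'$. Because $l\ge 3$, one has $r'=0$ exactly when $r=k=0$, one has $r'=1$ exactly when $r=1$ and $k=0$ (since $kl=1$ is impossible), and $r'\ge 2$ in every remaining case.

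Then I would split on the value of $r'$. When $r'=0$ the equation is $U^l+V_0^l+W^l=0$, which has no solution in nonzero integers by Fermat's Last Theorem (Wiles \cite{MR1333035}). When $r'\ge 2$ it has no solution in nonzero pairwise coprime integers by Ribet's theorem on $a^p+2^\alpha b^p+c^p=0$ \cite{MR1438112}. Both of these cases are thus vacuous, so any solution must have $r'=1$, which forces $r=1$ and $V=V_0$ odd. Since $l$ is odd, the equation then rearranges to $U^l+W^l=2(-V)^l$, and the resolution of $x^l+y^l=2z^l$ by Darmon and Merel \cite{MR1468926} forces $|U|=|V|=|W|=1$. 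Writing each of $U,V,W$ as $\pm1$ and using $x^l=x$ for $x=\pm1$, the equation collapses to $U+2V+W=0$, whose only solutions are $(U,V,W)=\pm(-1,1,-1)$, as claimed.

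I expect the main obstacle to be bookkeeping rather than hard analysis: the key is to ensure that the three cited resolutions jointly leave no gap across all odd primes $l\ge 3$ and all exponents $r'$. Concretely, I would need to confirm that Ribet's theorem genuinely covers every $r'\ge 2$ for every odd $l$, and that the Darmon--Merel resolution of $x^l+y^l=2z^l$ includes the endpoint $l=3$. These small-exponent cases are the only points where one must check that the quoted literature is complete; once that is verified, the normalization step and the elementary sign count do the rest.
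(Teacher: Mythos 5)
Your proposal is correct and follows essentially the same route as the paper, whose entire proof is the trichotomy of citations: Wiles for the case $r=0$, Ribet for $r\ge 2$, and Darmon--Merel for $r=1$. Your additional steps --- normalizing $V=2^kV_0$ so that the middle variable is odd before invoking Ribet, and the final sign count extracting $(U,V,W)=\pm(-1,1,-1)$ from $|U|=|V|=|W|=1$ --- are exactly the bookkeeping the paper leaves implicit, so there is nothing in your argument that diverges from or falls short of the published proof.
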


\begin{proof}
The result is due to Wiles \cite{MR1333035} for $r=0$, Ribet \cite{MR1438112}  for $r \ge 2$, and Darmon and Merel \cite{MR1468926} for $r=1$.  
\end{proof}

\begin{lem}
Suppose that $r$ is a non-negative integer and $U,V,W$ are non-zero pairwise coprime integers. If 
\begin{equation} \label{lem2}
U^4-2^rV^4+W^4=0
\end{equation}
then $r=1$ and $|U|=|V|=|W|=1$. There are no solutions to the equation
\begin{equation} \label{lem3}
2^rU^4-V^4+W^4=0.
\end{equation}
 \end{lem}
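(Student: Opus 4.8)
The plan is to reduce both equations to Fermat's two classical quartic results, namely that $x^4+y^4=z^2$ and $x^4-y^4=z^2$ have no solutions in nonzero integers; unlike the preceding theorem, the exponent here is $4$ rather than an odd prime, so the argument is pure descent and $2$-adic bookkeeping rather than an appeal to the modular machinery.

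First I would dispose of (\ref{lem2}), rewritten as $U^4+W^4=2^rV^4$. A valuation argument at $2$ pins down $r$: if one of $U,W$ is even then, by coprimality, the other is odd and the left side is odd, forcing $r=0$; if both are odd then $U^4+W^4\equiv 2\pmod 4$, so $\mathrm{ord}_2$ of the left side is exactly $1$ and hence $r=1$. Thus only $r\in\{0,1\}$ occurs. The case $r=0$ gives $U^4+W^4=(V^2)^2$, contradicting the first Fermat result. For $r=1$ both $U,W$ are odd, so $(U^4-W^4)/2\in\mathbb{Z}$ and the identity
\[
\left(\frac{U^4-W^4}{2}\right)^2=(V^2)^4-(UW)^4,
\]
together with the second Fermat result (applied with $V^2\neq 0$ and $UW\neq 0$), forces $U^4=W^4$. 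Then $2U^4=U^4+W^4=2V^4$ yields $|U|=|V|=|W|$, and pairwise coprimality collapses this to $|U|=|V|=|W|=1$, as required.

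Next I would treat (\ref{lem3}), rewritten as $V^4-W^4=2^rU^4$. When $r$ is even, say $r=2\rho$, the right side is $(2^\rho U^2)^2$ and the second Fermat result rules out any solution with $U,V,W$ nonzero. The remaining case, $r$ odd, is the crux. Since the right side is even and $\gcd(V,W)=1$, a congruence modulo $2$ shows $V$ and $W$ are both odd; writing $r=2\rho+1$ turns the equation into $V^4-W^4=2Z^2$ with $Z=2^\rho U^2\neq 0$. The main obstacle is therefore the auxiliary claim that $x^4-y^4=2z^2$ has no solution with $z\neq 0$. I would prove this by descent: setting $u=(x^2+y^2)/2$ (odd) and $v=(x^2-y^2)/2$ (even) gives a primitive triple $u^2=(xy)^2+v^2$, with $\gcd(u,v)=1$ coming from $u+v=x^2$, $u-v=y^2$, together with $z^2=2uv$. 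The standard parametrization $v=2mn$, $u=m^2+n^2$ then reduces $z^2=4mn(m^2+n^2)$, via the pairwise coprimality of $m$, $n$ and $m^2+n^2$, to the conclusion that $|m|$, $|n|$ and $m^2+n^2$ are perfect squares; that is $a^4+b^4=c^2$, contradicting the first Fermat result unless $mn=0$, i.e. $z=0$. Applied to $V^4-W^4=2Z^2$ this forces $Z=0$ and hence $U=0$, the desired contradiction, so (\ref{lem3}) has no solutions.

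I expect the odd-$r$ subcase of (\ref{lem3}) to be the only genuinely substantial step, as everything else is either a valuation count or a one-line appeal to a Fermat result. The care required there is purely in tracking parities and signs, so that $u$ is odd and $v$ even, the Pythagorean parametrization is legitimate, and the coprimality of $m$, $n$ and $m^2+n^2$ is correctly justified before extracting the square factors.
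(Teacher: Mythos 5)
Your proof is correct and complete. It differs from the paper in an important practical sense: the paper gives no argument at all for this lemma, its ``proof'' being a one-line citation to Section~6.5 of Cohen's \emph{Number Theory I: Tools and Diophantine Equations}, where the equations $x^4+y^4=z^2$, $x^4-y^4=z^2$ and the variants with a factor $2$ are treated. What you have done is reconstruct the content behind that citation: the $2$-adic valuation argument pinning down $r\in\{0,1\}$ for (\ref{lem2}) (and splitting (\ref{lem3}) into $r$ even versus $r$ odd), the identity $\bigl((U^4-W^4)/2\bigr)^2=(V^2)^4-(UW)^4$ reducing $U^4+W^4=2V^4$ to Fermat's theorem on $x^4-y^4=z^2$, and the Pythagorean-parametrization descent reducing $V^4-W^4=2Z^2$ to Fermat's theorem on $x^4+y^4=z^2$. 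These are essentially the classical descent arguments appearing in the cited reference, so mathematically the two routes coincide; the difference is that your version is self-contained modulo only the two classical quartic theorems of Fermat, whereas the paper trades completeness for brevity. Your bookkeeping is sound at the points where care is needed: $u=(x^2+y^2)/2$ odd and $v=(x^2-y^2)/2$ even because $x,y$ are odd, $\gcd(u,v)=1$ because $u+v=x^2$ and $u-v=y^2$ are coprime (which also makes the triple $u^2=(xy)^2+v^2$ primitive), and the sign issue is harmless since $z^2=2uv>0$ with $u>0$ forces $v>0$, hence $mn>0$, so one may take $m,n>0$ before extracting the squares $m=a^2$, $n=b^2$, $m^2+n^2=c^2$.
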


\begin{proof}
See, for example, Section~6.5 
of \cite{MR2312337}. 
\end{proof}

\begin{proof}[Proof of Theorem \ref{first}] The only torsion points in $E_N(\mathbb{Q})$ are $2$-torsion. If $b$ is 
even then the rank of $E_N(\mathbb{Q})$ is zero (since it is zero when $b=0$), so assume that $b$ is odd.  Assume that $P \in 2E_N(\mathbb{Q})$ is non-zero.
The fundamental 2-descent map (see, for example, Section 8.2.3 in \cite{MR2312337}) shows that:
\begin{eqnarray*}
A_P&=&z_1^2; \\
A_P-2^ap^bB_P^2&=&z_2^2; \\
A_P+2^ap^bB_P^2&=&z_3^2. 
\end{eqnarray*}
Suppose that $p$ divides $A_P$ exactly $e$ times. Then $e$ is even, $e<b$, and, by replacing $A_P$ by $A_P/p^e$ and $b$ by 
$b-e$, we can assume that $p$ does not divide $A_P$. Equations (\ref{e3})-(\ref{e2}) become:
\begin{eqnarray*}
-2^ap^bB_P^2&=&(z_2-z_1)(z_2+z_1); \\
2^ap^bB_P^2&=&(z_3-z_1)(z_3+z_1); \\
2^{a+1}p^bB_P^2&=&(z_3-z_2)(z_3+z_2).
\end{eqnarray*}
Now $\gcd(z_j-z_i,z_j+z_i)$ divides $2z_j$ and $2^{a+1}p^bB_P^2$, so is a power of $2$. 

Suppose that $B_P$ is a perfect power. Now $p$ divides $z_2+(-1)^{s_1}z_1$ and $z_3+(-1)^{s_2}z_1$, where 
$s_1,s_2 \in \{ 0,1 \}$. So $p$ divides $z_3+(-1)^{s_3}z_2$, where $s_3=s_1+s_2+1$.
\it{Siegel's identity} \rm:
\[
(-1)^{s_3+1}\frac{z_2+(-1)^{s_1}z_1}{z_3+(-1)^{s_3}z_2}-\frac{z_3+(-1)^{s_2}z_1}{z_3+(-1)^{s_3}z_2}+1=0
\]
gives (\ref{lem}), (\ref{lem2}) or (\ref{lem3}).  Thus
\[
(-1)^{s_3+1}\frac{z_2+(-1)^{s_1}z_1}{z_3+(-1)^{s_3}z_2}=u \textrm{ and }  -\frac{z_3+(-1)^{s_2}z_1}{z_3+(-1)^{s_3}z_2}=v,
\]
where $(u,v)=(1,-2),(-2,1)$ or $(-\frac{1}{2},-\frac{1}{2})$. So
\[
-2^ap^bB_P^2=(-1)^{s_3+1}u(z_2+(-1)^{s_1+1}z_1)(z_3+(-1)^{s_3}z_2)
\]
and
\[
2^ap^bB_P^2=-v(z_3+(-1)^{s_2+1}z_1)(z_3+(-1)^{s_3}z_2).
\]
Dividing the two equations gives
\[
\frac{u}{v}=(-1)^{s_3+1}\frac{z_3+(-1)^{s_2+1}z_1}{z_2+(-1)^{s_1+1}z_1}.
\]
But
\[
\frac{z_3+(-1)^{s_3}z_2}{z_2+(-1)^{s_1+1}z_1}-\frac{z_3+(-1)^{s_2+1}z_1}{z_2+(-1)^{s_1+1}z_1}=(-1)^{s_3},
\]
thus $u \ne v$,
\[
\frac{z_2+(-1)^{s_1+1}z_1}{z_3+(-1)^{s_3}z_2}=(-1)^{s_3}\frac{v}{v-u},
\]
and
\[
\left( \frac{z_2+(-1)^{s_1+1}z_1}{z_3+(-1)^{s_3}z_2} \right) \left( \frac{z_2+(-1)^{s_1}z_1}{z_3+(-1)^{s_3}z_2} 
\right)=\frac{uv}{u-v}=
\frac{-2^ap^bB_P^2}{(z_3+(-1)^{s_3}z_2)^2}.
\]
So
\[
\frac{-uv}{2^ap^b(u-v)}
\]
is a square. Hence $(u,v)=(1,-2)$, $p=3$, $a$ is odd, $B_P=1$ and 
\[
(z_3+(-1)^{s_3}z_2)^2=2^{a-1}3^{b+1}.
\]
Thus
\[
z_3=\frac{1}{2}(z_3+(-1)^{s_3}z_2+z_3+(-1)^{s_3+1}z_2)=
\pm \frac{1}{2} \left( 2^{\frac{a-1}{2}}3^{\frac{b+1}{2}}+\frac{2^{a+1}3^b}{2^{\frac{a-1}{2}}3^{\frac{b+1}{2}}}  \right)
\]
and
\[
A_P=z_3^2-2^a3^b=2^{a-3}3^{b-1}25.
\]
From which it follows that $a \ge 3$ and $P$ is as required. \end{proof}

\begin{proof}[Proof of Theorem \ref{second}] Let $N=2^ap$ where $a=0$ or $1$. Let $P \in E_N(\mathbb{Q})$ non-torsion point with $x(P) \in -{\mathbb{Q}^*}^2$ and $x(P)+N \in p{\mathbb{Q}^*}^2$. If $m$ is even then the result follows from Theorem \ref{first}. If $m$ is odd then the fundamental 2-descent map (see, for example, 8.2.3 in \cite{MR2312337})  shows that $\alpha_1=-1$ and $\alpha_2=p$ so $\alpha_3=-p$.

Now (\ref{e4}) becomes $z_2^2+z_3^2=2^{a+1}B_P^2$ and (\ref{e2}) becomes $2z_1^2+pz_2^2=pz_3^2$ so 
\[
z_2^2+2p(z_1/p)^2=z_3^2.
\]
Corollary 6.3.6 in \cite{MR2312337} with the particular solution $(1, 0, 1)$ gives $dz_2=s^2-2pt^2$, $dz_1=2pst$, $dz_3=s^2+2pt^2$  where $s,t$ are coprime integers and $d \mid 2p$.

If $d=\pm 1$ then  $|z_2|=s^2-2pt^2$, $|z_1|=2pst$ and $|z_3|=s^2+2pt^2$. Since $z_1$ is even, $a=0$ and substituting into (\ref{e4}) gives $(s^2-2pt^2)^2+(s^2+2pt^2)^2=2B_P^2$ so 
\[
s^4+4p^2t^4=B_P^2.
\]
Now applying Theorem 1 in \cite{MR2646760} shows that $B_P$ cannot be a perfect power.

If $d=\pm 2$ then $|z_2|=2s^2-pt^2$, $|z_1|=2pst$ and $|z_3|=2s^2+pt^2$,  where $s,t$ are coprime integers. So $a=0$ and 
substituting into (\ref{e4}) gives 
\[
4s^4+p^2t^4=B_P^2.
\]
So $4s^4=B_P^2-p^2t^4=(B_P+pt^2)(B_P-pt^2)$. Since $2 \nmid B_P$ and $p \nmid B_P$, we have $B_P+pt^2=\pm 2s'^4$ and $B_P-pt^2=\pm 2t'^4$ where $s',t'$ are coprime and odd. Thus $\pm s'^4 \pm t'^4=B_P$. Again applying Theorem 1 in \cite{MR2646760} shows that if $B_P$ is a perfect power then it is a square or a cube, but these remaining cases are well known (see 6.5.2 of \cite{MR2312337} and 14.6.6 of \cite{MR2312338}).

Finally, the cases $d=\pm p$ and $d=\pm 2p$ give the same two parametrizations already considered above.       
\end{proof}

\bibliographystyle{amsplain}
\bibliography{myrefs}
\end{document}